\documentclass[a4paper,11pt]{amsart}
\usepackage[english]{babel}
\usepackage{amsmath}
\usepackage{amssymb}
\usepackage{mathrsfs}
\usepackage{enumerate}
\usepackage{mathtools}
\usepackage{tikz,tkz-euclide}
\usetikzlibrary{shapes.geometric, arrows}
\tikzstyle{startstop} = [rectangle, rounded corners, minimum width=2cm, minimum height=1cm,text centered, draw=black, fill=red!30]
\tikzstyle{io} = [trapezium, trapezium left angle=70, trapezium right angle=110, minimum width=3cm, minimum height=1cm, text centered, draw=black, fill=blue!30]
\tikzstyle{process} = [rectangle, minimum width=3cm, minimum height=1cm, text centered, draw=black, fill=orange!30]
\tikzstyle{decision} = [diamond, minimum width=3cm, minimum height=1cm, text centered, draw=black, fill=green!30]
\tikzstyle{arrow} = [thick,->,>=stealth]

\usepackage{pgfplots}
\usepackage{hyperref}
\usetikzlibrary{arrows}
\usepackage{makecell}

\usepackage[most]{tcolorbox}
\usepackage{lmodern} 

\usepackage[]{algorithm2e}

\usepackage[most]{tcolorbox}

\oddsidemargin = 0.0mm
\evensidemargin = 0.0mm
\topmargin = 10mm
\textheight = 238mm
\textwidth = 170mm
\voffset = -10mm
\hoffset = -5.5mm

\newtheorem{thm}{Theorem}[section]
\newtheorem{Lemma}[thm]{Lemma}
\newtheorem{Proposition}[thm]{Proposition}
\newtheorem{Corollary}[thm]{Corollary}
\newtheorem{Conjecture}[thm]{Conjecture}
\newtheorem*{thm*}{Theorem}

\theoremstyle{definition}
\newtheorem{Notation}[thm]{Notation}

\newtheorem{Definition}[thm]{Definition}
\newtheorem{Remark}[thm]{Remark}

\newtheorem{Example}[thm]{Example}

\definecolor{wwwwww}{rgb}{0.4,0.4,0.4}

\newcommand{\G}{\mathbb{G}}

\newcommand{\p}{\mathbb{P}}

\DeclareMathOperator{\Sing}{Sing}
\DeclareMathOperator{\expdim}{expdim}

\DeclareMathOperator{\Sec}{Sec}

\renewcommand{\sec}{\mathbb{S}ec}

\hypersetup{pdfpagemode=UseNone}
\hypersetup{pdfstartview=FitH}
		
\setcounter{tocdepth}{1}		
				
\begin{document}

\title{Ample bodies and Terracini loci of projective varieties}

\author[Antonio Laface]{Antonio Laface}
\address{\sc Antonio Laface\\
Departamento de Matematica, Universidad de Concepci\'on\\
Casilla 160-C, Concepci\'on\\
Chile}
\email{alaface@udec.cl}

\author[Alex Massarenti]{Alex Massarenti}
\address{\sc Alex Massarenti\\ Dipartimento di Matematica e Informatica, Universit\`a di Ferrara, Via Machiavelli 30, 44121 Ferrara, Italy}
\email{alex.massarenti@unife.it}
%

\date{\today}
\subjclass[2020]{Primary 14N07; Secondary 14N05, 51N35, 14Q15, 14N15}
\keywords{Terracini loci, Toric varieties, Segre-Veronese varieties}

\begin{abstract}
We introduce the notion of ample body of a projective variety and use it to prove emptiness results for Terracini loci and specific identifiability results for toric and homogeneous varieties.
\end{abstract}

\maketitle
\setcounter{tocdepth}{1}
\tableofcontents

\section{Introduction}
Let $X\subset\mathbb{P}^N$ be an irreducible projective variety. The $h$-Terracini locus of $X$ parametrizes unordered $h$-uples of distinct points $x_1,\dots,x_h \in X$ at which the tangent spaces span a linear space of dimension smaller than expected. Terracini loci have been introduced in \cite{BC21} and then studied for several relevant varieties such as Veronese and Segre varieties \cite{BBS20}, \cite{Bal22a}, \cite{Bal22}, \cite{CG23}, \cite{BV23}.

These loci are closely related to the concepts of secant defectiveness and identifiability. The $h$-secant variety
$\sec_h(X)\subset\mathbb{P}^N$ of $X$ is the Zariski closure of the
union of the $(h-1)$-planes spanned by collections of $h$ points of
$X$.

The expected dimension of $\mathbb{S}ec_{h}(X)$ is $\expdim(\mathbb{S}ec_{h}(X)):= \min\{nh+h-1,N\}$. The actual dimension of $\mathbb{S}ec_{h}(X)$ may be  smaller than the expected
one. The variety $X$ is $h$-defective if
$\dim(\mathbb{S}ec_{h}(X)) 
< \expdim(\mathbb{S}ec_{h}(X))$ and 
$h$-identifiable if through a general point of $\sec_h(X)$ there passes a unique
$(h-1)$-plane spanned by $h$ points of $X$. Furthermore, if this last property holds for a special point $p\in \sec_h(X)$ we say that $p$ is $h$-identifiable. 

The Terracini's lemma yields that $X$ is $h$-defective if and only if the $h$-Terracini locus of $X$ coincides with the symmetric product $X^h/S_h$. In Section \ref{TSIB} we relate the emptiness of certain Terracini loci to specific identifiability. This property, especially when the ambient projective space parametrized tensors, is relevant also in applied sciences, for instance in psycho-metrics, chemo-metrics, signal processing, numerical linear algebra, computer vision, numerical analysis, neuroscience and graph analysis \cite{BK09}.

In Section \ref{TVAB} we associate to a projective variety $X$ a geometric object $\mathscr A_X$ that we call the ample body of $X$. This is the convex hull of ample divisor classes of $X$.
When the Mori cone of $X$ is rational polyhedral $\mathscr A_X$ turns out to be a polyhedron which is the Minkowski sum of a rational polytope $A_X$ and of the nef cone of $X$.
In many case we manage to control the geometry of $\mathscr A_X$ and to use it to prove emptiness results for Terracini loci. For instance, summing up Proposition \ref{TC2} and Theorem \ref{main_to}, for toric varieties we have the following:

\begin{thm}\label{thA}
Let $P\subseteq M_{\mathbb Q}$ be a full 
dimensional lattice polytope such that 
the corresponding projective toric variety $X_P\subseteq\mathbb P^{|P\cap M|-1}$, embedded by a complete linear system $|L|$, is smooth, and set
$$
 \ell(P) := \min\{|L\cap M|-1\, | \, L \text{ is a 1-dimensional face of }P\}.
$$ 
For $2$-Terracini loci the following are equivalent:
\begin{itemize}
\item[-] $\ell(P)\geq 3$;
\item[-] $T_2(X)$ is empty;
\item[-] $X_P$ does not contain conics.
\end{itemize}
Furthermore, if $A_{X_P}$ is a normal lattice polytope then the following are equivalent:
\begin{itemize}
\item[-] $\ell(P) = s$ with $s\geq 2h-1$;
\item[-] $L\in s\cdot\mathscr A_X$;
\item[-] $T_h(X)$ is empty.
\end{itemize}
 
\end{thm}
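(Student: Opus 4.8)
The plan is to convert every condition into a statement about the positivity of $L$ and then into the combinatorics of $P$. Write $n=\dim X_P$, set $V=H^0(X_P,L)$, so that $X_P\subseteq\P^N$ with $N+1=\dim V$, and for distinct $x_1,\dots,x_h$ consider the double point scheme $Z=2(x_1+\cdots+x_h)$ together with the $2$-jet evaluation
\[
\Phi\colon V\longrightarrow \bigoplus_{i=1}^h L\otimes\O_{X_P}/\mathfrak m_{x_i}^2 .
\]
By Terracini's description the span $\langle T_{x_1}X_P,\dots,T_{x_h}X_P\rangle$ is deficient, i.e. $(x_1,\dots,x_h)\in T_h(X_P)$, exactly when $\Phi$ fails to have maximal rank, equivalently when $Z$ does not impose independent conditions on $|L|$. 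Thus $T_h(X_P)$ is empty precisely when $\Phi$ is surjective for every $h$-tuple, and this is guaranteed once $L$ is $(2h-1)$-jet ample (take all multiplicities equal to $2$, so $\sum a_i=2h$). I would then invoke the toric criterion for jet ampleness on a smooth toric variety, that $L$ is $k$-jet ample if and only if $\ell(P)\ge k$; with $k=2h-1$ this gives $\ell(P)\ge 2h-1\Rightarrow T_h(X_P)=\emptyset$. Note $(2h-1)$-jet ampleness forces $\dim V\ge h(n+1)$, so surjectivity of $\Phi$ is indeed maximal rank.

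For the converse I would establish sharpness by exhibiting a Terracini tuple whenever $\ell(P)\le 2h-2$. Let $E$ be a shortest edge, $d=\ell(P)=L\cdot C_E\le 2h-2$, and let $C\cong\P^1$ be its torus-invariant curve, embedded by $L|_C\cong\O_{\P^1}(d)$ as a rational normal curve of degree $d$. Pick $h$ distinct points $x_1,\dots,x_h$ on $C$. The key point is that $\Phi$ factors through restriction to $C$: composing with the surjection $q\colon \bigoplus_i L\otimes\O_{X_P}/\mathfrak m_{x_i}^2\twoheadrightarrow\bigoplus_i L|_C\otimes\O_C/\mathfrak m_{x_i,C}^2$ produces the $2$-jet evaluation of $H^0(C,\O_{\P^1}(d))$, whose source has dimension $d+1\le 2h-1$ and whose target has dimension $2h$; such a map cannot be surjective, so its image has dimension $\le d+1<2h$ and $q\circ\Phi$ is not surjective. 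Since $q$ is surjective, $\Phi$ itself is not surjective, whence $(x_1,\dots,x_h)\in T_h(X_P)$. This reduction to the invariant curve is the step I expect to carry the most weight, as it converts an a priori delicate failure of positivity on $X_P$ into the elementary fact that $2h$ jet conditions cannot be imposed independently on a $(d+1)$-dimensional space of sections; I would treat the boundary regime $\dim V<h(n+1)$ separately, verifying there that non-surjectivity still records genuine deficiency.

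Next I would translate the numerical condition into the ample body. Since $A_X$ is a polytope and the nef cone is a cone, $s\cdot\mathscr A_X=sA_X+\Nef(X)=\underbrace{A_X+\cdots+A_X}_{s}+\Nef(X)$, so $L\in s\cdot\mathscr A_X$ amounts to writing $P$, up to a nef summand, as a Minkowski sum of $s$ ample lattice polytopes. Because lattice length of an edge is additive under Minkowski sums and every ample summand contributes at least $1$ to each edge, any such decomposition gives $\ell(P)\ge s$, yielding the easy implication $L\in s\cdot\mathscr A_X\Rightarrow\ell(P)\ge s$. For the reverse I would use that $A_X$ is a normal lattice polytope: normality is exactly what upgrades a rational decomposition into an honest integral one, so that $\ell(P)\ge s$ conversely produces $L\in s\cdot\mathscr A_X$. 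Combined with the jet ampleness criterion this closes the chain $\ell(P)=s\ (\ge 2h-1)\Leftrightarrow L\in s\cdot\mathscr A_X\Leftrightarrow T_h(X_P)=\emptyset$.

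Finally, for $h=2$, where $2h-1=3$, I would close a three-term cycle using the above. By the first two steps $\ell(P)\ge 3\Rightarrow T_2(X_P)=\emptyset$. If $X_P$ contains a line or an irreducible conic $C$, then two distinct points of $C$ have tangent spaces each containing the corresponding tangent line of $C$; two tangent lines of a conic meet at its pole, and on a line they coincide, so $T_{x_1}X_P\cap T_{x_2}X_P\ne\emptyset$ and $(x_1,x_2)\in T_2(X_P)$, whence $T_2(X_P)=\emptyset$ implies $X_P$ contains no conic. Conversely, if $\ell(P)\le 2$ then the invariant curve of a shortest edge is itself a line or a conic, so the absence of conics forces $\ell(P)\ge 3$. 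These three implications form a cycle and, together with Proposition \ref{TC2}, give the equivalence of $\ell(P)\ge 3$, emptiness of $T_2(X_P)$, and the absence of conics, completing the theorem.
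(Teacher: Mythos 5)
Your proposal is correct in substance but takes a genuinely different route from the paper for the central implication $\ell(P)\geq 2h-1\Rightarrow T_h(X_P)=\emptyset$. The paper never mentions jet ampleness: it uses the normality of $A_{X_P}$ (Proposition \ref{P_Body}) to write $L$ as a sum of $2h-1$ very ample lattice divisors, proves by an elementary product-of-sections argument (Lemma \ref{Lindep}, Corollary \ref{Cindep}) that the relevant zero-dimensional schemes impose independent conditions, and then feeds this into the Ballico--Chiantini criteria as in Proposition \ref{sevpts}; for the $h=2$ case without normality it instead reduces to invariant curves via a one-parameter-subgroup degeneration (Lemma \ref{spec}) and computes a block determinant of the Terracini matrix (Proposition \ref{TC2}). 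You replace all of this with Di Rocco's characterization of $k$-jet ampleness on smooth toric varieties ($L$ is $k$-jet ample if and only if every edge of $P$ has lattice length at least $k$) applied with $k=2h-1$, which directly gives surjectivity of the $2$-jet evaluation at any $h$ points; this is a legitimate and shorter argument, though it outsources the hard positivity statement to an external theorem, and it handles $h=2$ uniformly without the matrix computation. One point of logic to fix: your proof never actually needs the normality of $A_{X_P}$, yet you claim to use it for the implication $\ell(P)\geq s\Rightarrow L\in s\cdot\mathscr A_X$; that implication is purely numerical ($L\cdot C\geq s$ on the invariant curves generating the Mori cone, hence on all curves) and requires no normality --- in the paper normality enters only to convert $L\in(2h-1)\cdot\mathscr A_X$ into an honest integral sum of very ample divisors, a step your route bypasses entirely. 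Finally, in the boundary regime $N<hn+h-1$ the Terracini condition reads ``span $<N$'' rather than ``span $<hn+h-1$'', so non-surjectivity of your map $\Phi$ (equivalently, deficiency of the tangent lines along the short invariant curve) does not immediately yield membership in $T_h$; you flag this but do not close it --- the paper's own proof of Theorem \ref{main_to} is silent on the same point, so this is a shared loose end rather than a defect specific to your argument.
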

In Section \ref{appli} we apply Theorem \ref{thA} and the theory of ample bodies to specific toric varieties such as Segre-Veronese varieties, toric varieties of Picard rank two, and to homogeneous varieties. As a sample we summarize our main results for Segre-Veronese varieties in Corollaries \ref{TCSV}, \ref{TCSV2}.  

Let $\pmb{n}=(n_1,\dots,n_r)$ and $\pmb{d} = (d_1,\dots,d_r)$ be two $r$-uples of positive integers, with $n_1\leq \dots \leq n_r$ and
$N(\pmb{n},\pmb{d})=\prod_{i=1}^r\binom{n_i+d_i}{n_i}-1$. The Segre-Veronese variety $SV_{\pmb{d}}^{\pmb{n}}$ is the image in
$\mathbb{P}^{N(\pmb{n},\pmb{d})}$ of $\mathbb{P}^{n_1}\times\dots\times\mathbb{P}^{n_r}$ via the embedding induced by $ 
\mathcal{O}_{\mathbb{P}^{\pmb{n}} }(d_1,\dots,d_r)=\mathcal{O}_{\mathbb{P}(V_1^{*})}(d_1)\boxtimes\dots\boxtimes \mathcal{O}_{\mathbb{P}(V_1^{*})}(d_r)$.

\begin{thm}\label{thB}
If $h\leq\lceil\frac{d_i}{2}\rceil$ for all $i = 1,\dots,r$ then $T_h(SV_{\pmb{d}}^{\pmb{n}})$ is empty. Furthermore, if $2h\leq\lceil\frac{d_i}{2}\rceil$ for all $i = 1,\dots,r$ then any point of $\sec_h(SV_{\pmb{d}}^{\pmb{n}})\setminus \sec_{h-1}(SV_{\pmb{d}}^{\pmb{n}})$ is $h$-identifiable and $\sec_h(SV_{\pmb{d}}^{\pmb{n}})$ is smooth outside of $\sec_{h-1}(SV_{\pmb{d}}^{\pmb{n}})$.
\end{thm}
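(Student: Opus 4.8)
The plan is to realise $SV_{\pmb{d}}^{\pmb{n}}$ as a smooth projective toric variety, to read off the invariant $\ell(P)$ of its defining polytope, and then to apply Theorem \ref{thA} for the emptiness statement together with the identifiability machinery of Section \ref{TSIB} for the second part. First I would record that $SV_{\pmb{d}}^{\pmb{n}}=X_P$, where $P=d_1\Delta_{n_1}\times\cdots\times d_r\Delta_{n_r}$ is the product of the dilated standard simplices $\Delta_{n_i}$, the embedding being by the complete linear system of $L=\mathcal{O}_{\mathbb{P}^{\pmb{n}}}(d_1,\dots,d_r)$; since $\mathbb{P}^{n_1}\times\cdots\times\mathbb{P}^{n_r}$ is smooth, so is $X_P$. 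Every edge of a product of polytopes is an edge of a single factor times vertices of the remaining ones, and the edges of $d_i\Delta_{n_i}$ have lattice length $d_i$, whence $\ell(P)=\min_i d_i$.

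Next I would identify the ample body. As $\Pic(\mathbb{P}^{\pmb{n}})=\mathbb{Z}^r$ has nef cone the positive orthant and integral ample classes exactly the lattice points with all coordinates $\geq 1$, the convex hull of the ample classes is $\mathscr{A}_{X_P}=(1,\dots,1)+\Nef(X_P)$, so that $A_{X_P}$ is the single lattice point $(1,\dots,1)$, which is a normal lattice polytope; this places us in the second part of Theorem \ref{thA}. An elementary check shows that $h\leq\lceil d_i/2\rceil$ for every $i$ is equivalent to $d_i\geq 2h-1$ for every $i$, i.e.\ to $\ell(P)\geq 2h-1$, equivalently to $L\in(2h-1)\cdot\mathscr{A}_{X_P}$; by Theorem \ref{thA} each of these is equivalent to $T_h(SV_{\pmb{d}}^{\pmb{n}})=\emptyset$, which is the first assertion.

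For the second assertion, assume $2h\leq\lceil d_i/2\rceil$ for all $i$. Then for every $j\leq 2h$ one has $j\leq\lceil d_i/2\rceil$ for all $i$, so the first assertion yields $T_j(SV_{\pmb{d}}^{\pmb{n}})=\emptyset$; in particular every Terracini locus up to order $2h$ vanishes. I would then feed this into Section \ref{TSIB}: were a point of $\sec_h\setminus\sec_{h-1}$ to admit two distinct $h$-decompositions, the results of that section would place their union, a configuration of some $m\leq 2h$ points, in the Terracini locus $T_m$, contradicting $T_m=\emptyset$; hence every such point is $h$-identifiable. Finally, $T_h=\emptyset$ forces the tangent space to $\sec_h$ at the unique decomposition of each such point to have the expected dimension, so $\sec_h$ is smooth away from $\sec_{h-1}$.

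The step I expect to demand the most care is the bookkeeping that aligns this setup with the hypotheses of the two ingredients invoked above. On the toric side one must verify that $A_{X_P}$ is normal in the precise sense required by Theorem \ref{thA} and that the equivalence $\ell(P)\geq 2h-1\Leftrightarrow L\in(2h-1)\cdot\mathscr{A}_{X_P}$ is exactly the one it supplies. On the identifiability side, the substantive implication, namely that two distinct $h$-decompositions of a point force an actual point of a Terracini locus $T_m$ with $m\leq 2h$ (via a dimension count on the joint span together with Terracini's lemma), is the province of Section \ref{TSIB}; the work specific to Theorem \ref{thB} is to check that the vanishing of all $T_j$ with $j\leq 2h$ is precisely the hypothesis under which that section delivers $h$-identifiability and the smoothness of $\sec_h$ outside $\sec_{h-1}$.
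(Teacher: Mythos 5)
Your argument is correct, and the second half (deduce $T_{2h}=\emptyset$ from the first assertion, then invoke the ``two decompositions of $m\leq 2h$ points land in $T_m$'' mechanism plus the submersion argument) is exactly the paper's route via Proposition~\ref{h-2h}. For the emptiness statement, however, you take a genuinely different path from the paper's Corollary~\ref{TCSV}. The paper does not pass through the toric Theorem~\ref{main_to} at all: it simply observes that $h\leq\lceil d_i/2\rceil$ means $d_i\geq 2h-1$, writes $L=2A+B$ with $A=\mathcal O_{\mathbb P^{\pmb n}}(h-1,\dots,h-1)$ and $B=\mathcal O_{\mathbb P^{\pmb n}}(b_1,\dots,b_r)$, $b_i\geq 1$, so that $L$ is a sum of $2h-1$ very ample divisors, and applies Proposition~\ref{sevpts} directly. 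You instead realize $SV_{\pmb d}^{\pmb n}$ as $X_P$ for $P=d_1\Delta_{n_1}\times\cdots\times d_r\Delta_{n_r}$, compute $\ell(P)=\min_i d_i$ and $A_{X_P}=\{(1,\dots,1)\}$ (all of which is correct: edges of a product are edges of a factor times vertices of the rest, and the nef cone of $\mathbb P^{\pmb n}$ is the smooth simplicial orthant, so $A_{X_P}$ is a single lattice point, trivially normal), and then cite Theorem~\ref{thA}. Since Theorem~\ref{main_to} itself reduces to Proposition~\ref{sevpts} via Proposition~\ref{P_Body}, the two arguments have the same engine; yours costs some extra polytope bookkeeping but buys the converse implication, namely that $T_h(SV_{\pmb d}^{\pmb n})=\emptyset$ \emph{only if} $\min_i d_i\geq 2h-1$, which the paper's corollary does not claim. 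Both are valid proofs of the stated theorem.
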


Finally, in Proposition \ref{grass} we describe $2$-Terracini loci of Grassmannians, and in Section \ref{Fano} we discuss ample bodies of toric Fano varieties and prove the following result:

\begin{thm}\label{thC}
Let $P\subseteq M_{\mathbb Q}$ be a full dimensional lattice polytope such that 
the corresponding projective toric variety $X_P\subseteq\mathbb P^{|P\cap M|-1}$, embedded by a complete linear system $|L|$, is smooth and Fano of dimension at most four. Then $A_{X_P}$ is a lattice point, and $T_h(X)$ is empty if and only if  $\ell(P) = s$ with $s\geq 2h-1$.
\end{thm}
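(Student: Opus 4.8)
The plan is to deduce the second assertion from the first by means of Theorem \ref{thA}. A single lattice point is a (zero-dimensional) normal lattice polytope, so as soon as $A_{X_P}$ is known to be a lattice point the hypothesis of the second part of Theorem \ref{thA} holds, and the equivalence between $\ell(P)=s$ with $s\geq 2h-1$ and the emptiness of $T_h(X)$ follows immediately. The whole content of the statement is therefore the claim that $A_{X_P}$ is a lattice point, and this is where I would put the work.

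First I would set up a criterion for $A_X$ to be a lattice point. Since $X=X_P$ is smooth and projective toric, its Mori cone $\NEbar(X)$ is rational polyhedral; let $C_1,\dots,C_m$ be the primitive generators of its extremal rays, realized as classes of torus-invariant curves. An integral class $D$ is ample exactly when $D\cdot C_i\geq 1$ for all $i$, so $\mathscr A_X$ is the integer hull of the rational polyhedron
\[
 Q=\{D\in N^1(X)_{\mathbb Q} : D\cdot C_i\geq 1,\ i=1,\dots,m\},
\]
whose recession cone is $\Nef(X)$. It is then enough to exhibit an integral class $v\in\Pic(X)$ with $v\cdot C_i=1$ for every $i$: such a $v$ forces $Q=v+\Nef(X)$, and since the integer hull of a lattice translate of a rational cone is the translate itself, $\mathscr A_X=v+\Nef(X)$ and $A_X=\{v\}$. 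I would first confirm that this recovers the known cases, with $v=\mathcal O_{\mathbb P^n}(1)$ on $\mathbb P^n$ and $v=-K_X$ on the degree six del Pezzo surface, where every extremal curve is a $(-1)$-curve of anticanonical degree one.

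To produce such a $v$ I would run through the classification of smooth toric Fano varieties of dimension at most four: one in dimension one, the five smooth toric del Pezzo surfaces, the $18$ smooth toric Fano threefolds, and the $124$ smooth toric Fano fourfolds (Batyrev, Watanabe--Watanabe, Sato). For each fan the extremal classes $C_i$ are read off from its primitive relations, and one solves the linear system $v\cdot C_i=1$ in $\Pic(X)$. Two points must be checked uniformly over this finite list: that the system is consistent, which amounts to requiring $\sum_i\lambda_i=0$ for every linear relation $\sum_i\lambda_i C_i=0$ among the extremal classes, and that the resulting solution is integral. Both can be settled by a direct computation with the fans.

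The main obstacle is exactly the consistency of the system: a relation $\sum_i\lambda_i C_i=0$ with $\sum_i\lambda_i\neq 0$ would obstruct the existence of $v$ and force $A_X$ to be positive-dimensional. The restriction to dimension at most four is what ensures, through the classification, that no such obstruction arises and that $v$ is integral. I would expect the argument to remain a genuinely finite verification rather than to collapse to a single structural statement, and I would explicitly flag that the lattice-point property can be expected to fail in higher dimension.
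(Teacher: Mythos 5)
Your proposal is correct and takes essentially the same route as the paper: both deduce the equivalence from Theorem \ref{main_to} once $A_{X_P}$ is shown to be a single lattice point (hence automatically a normal lattice polytope), and both establish that claim by a finite verification over the classification of smooth toric Fano varieties of dimension at most four --- the paper via a Magma computation of the compact part of $\mathscr A_X$ for the relevant database entries, you via the equivalent linear system $v\cdot C_i=1$ over the extremal curve classes. One harmless slip: by the paper's definition $\mathscr A_X$ is the real polyhedron cut out by $D\cdot C\geq 1$ for all curves, not the integer hull of $Q$ (otherwise $A_X$ would always be a lattice polytope, contradicting the surface example in Section \ref{TVAB}); your sufficient criterion survives this, since an integral $v$ with $v\cdot C_i=1$ for all extremal classes yields $v+\Nef(X)\subseteq\mathscr A_X\subseteq Q=v+\Nef(X)$.
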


The proof of Theorem \ref{thC} is based on our results for toric varieties, the data contained in the \href{http://www.grdb.co.uk/search/toricsmooth}{Graded Ring Database}, and a Magma \cite{Magma97} script we developed to compute $A_X$.

\subsection*{Acknowledgments}
This collaboration started during the \href{https://agates.mimuw.edu.pl/index.php/agates/geometry-of-secants}{"\textit{Geometry of secants workshop}"} held at \href{https://www.impan.pl/en}{IMPAN} from October $24$ to October $28$, $2022$. We thank all the organizers for the stimulating environment, Luca Chiantini and Ciro Ciliberto for introducing us to Terracini loci during the workshop, and Rick Rischter for many helpful conversations. 

This work is partially supported by the Thematic Research Programme "\textit{Tensors: geometry, complexity and quantum entanglement}", University of Warsaw, Excellence Initiative - Research University and the Simons Foundation Award No. 663281 granted to the Institute of Mathematics of the Polish Academy of Sciences for the years 2021-2023. The first author has been partially supported by Proyecto FONDECYT Regular n. 1230287.

\section{Terracini loci, specific identifiability and Bronowski's conjecture}\label{TSIB}
Let $X\subset\mathbb{P}^N$ be an irreducible and non-degenerate 
variety of dimension $n$ and let $\Gamma_h(X)\subset X\times \dots
\times X\times\G(h-1,N)$, where $h\leq N$, be the closure of the graph
of the rational map $\alpha: X\times\dots\times X \dasharrow
\G(h-1,N)$ taking $h$ general points to their linear span. Observe that $\Gamma_h(X)$ is irreducible
and reduced of dimension $hn$. Let $\pi_2:\Gamma_h(X)\to\G(h-1,N)$ be
the natural projection, and
$\mathcal{S}_h(X):=\pi_2(\Gamma_h(X))\subset\G(h-1,N)$. Again
$\mathcal{S}_h(X)$ is irreducible and reduced of dimension
$hn$. Finally, consider
$$\mathcal{I}_h=\{(x,\Lambda) \: | \: x\in \Lambda\}\subset\mathbb{P}^N\times\G(h-1,N)$$
with the projections $\pi_h^X$ and $\psi_h^X$ onto the factors. The abstract $h$-secant variety is the irreducible variety
$$\Sec_{h}(X):=(\psi_h^X)^{-1}(\mathcal{S}_h(X))\subset \mathcal{I}_h.$$
The $h$-secant variety is defined as
$$\sec_{h}(X):=\pi_h^X(\Sec_{h}(X))\subset\mathbb{P}^N.$$
It immediately follows that $\Sec_{h}(X)$ is an $(hn+h-1)$-dimensional
variety with a $\mathbb{P}^{h-1}$-bundle structure over
$\mathcal{S}_h(X)$. We say that $X$ is $h$-defective if
$\dim\sec_{h}(X)<\min\{\dim\Sec_{h}(X),N\}$. We will denote by $\sec_h(X)^{o}$ the union of the $(h-1)$-planes spanned by $h$ linearly independent points of $X$.

\begin{Definition}
When $\pi_h^X:\Sec_h(X)\rightarrow\sec_h(X)$ is generically finite we will call
its degree the $h$-secant degree of $X$, and we will say that $X$ is
$h$-identifiable when its $h$-secant degree is one.  
\end{Definition}

\begin{Remark}\label{SI}
Note that even when $\pi_h^X:\Sec_h(X)\rightarrow\sec_h(X)$ is birational it might have positive dimensional fibers or zero dimensional fibers of degree bigger than one. In this last case $\sec_h(X)$ will not be normal. We will say that $p\in \sec_h(X)$ is $h$-identifiable if $(\pi_h^{X})^{-1}(p)$ has degree one. 
\end{Remark}

\begin{Definition}\label{TC_Def}
Let $X\subset\mathbb{P}^N$ be a smooth, irreducible and non-degenerate variety of dimension $n$. The $h$-th Terracini locus $T_h(X)$ of $X$ is the closure of
$$
T_h(X)^{o} = \{\{x_1,\dots,x_h\} \: | \: x_i\neq x_j \: \text{and} \: \dim(\left\langle T_{x_1}X,\dots,T_{x_h}X\right\rangle) < \min\{hn+h-1,N\}\}\subseteq X^h/S_h
$$
in the $h$-th symmetric product $X^h/S_h$ of $X$.
\end{Definition}

\begin{Definition}\label{TC_Mat}
Given a smooth, irreducible and non-degenerate variety $X\subset\mathbb{P}^N$ of dimension $n$ and a local parametrization 
$$
\begin{array}{lccc}
\phi: & k^n & \longrightarrow & X\subset\mathbb{P}^{N}\\
& (u_1,\dots,u_n) & \mapsto & [1,\phi_1(u_1,\dots,u_n),\dots,\phi_N(u_1,\dots,u_n)]
\end{array}
$$
we define the Terracini matrix $T_{X}(x_1,\dots,x_h)$ of the $h$ points $x_1,\dots,x_h\in \phi(k^n)$ as
$$
T_{X}(x_1,\dots,x_h) = 
\left(\begin{array}{cccc}
1 & \phi_1(p_1) & \dots & \phi_N(p_1)\\ 
\vdots & \vdots & \ddots & \vdots\\
1 & \phi_1(p_h) & \dots & \phi_N(p_h)\\
0 & \phi_{1,u_1}(p_1) & \dots & \phi_{N,u_1}(p_1)\\
\vdots & \vdots & \ddots & \vdots\\
0 & \phi_{1,u_n}(p_h) & \dots & \phi_{N,u_n}(p_h)
\end{array} 
\right)
$$
where $p_i = \phi^{-1}(x_i)$ and $\phi_{i,u_j} = \frac{\partial \phi_i}{\partial u_j}$.
\end{Definition}

In the following we relate the emptiness of Terracini loci to specific identifiability. 

\begin{Proposition}\label{nid}
If $T_h(X)$ is empty then for any $p\in \sec_h(X)^{o}\setminus \sec_{h-1}(X)^{o}$ the fiber $(\pi_h^X)^{-1}(p)$ consists of a finite number of points. 
\end{Proposition}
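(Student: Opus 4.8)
The plan is to translate the statement into a tangency condition and then contradict a moving family of secant planes by differentiating. Under the identification $\Sec_h(X)=\{(q,\Lambda)\mid \Lambda\in\mathcal S_h(X),\ q\in\Lambda\}$ the fiber $(\pi_h^X)^{-1}(p)$ is the closed subset $\{\Lambda\in\mathcal S_h(X)\mid p\in\Lambda\}$ of the projective variety $\Sec_h(X)$; hence it is finite if and only if it has no positive-dimensional component. The key consequence of the hypothesis is that, $T_h(X)$ being empty, for every $h$ distinct points $x_1,\dots,x_h\in X$ the span $\langle T_{x_1}X,\dots,T_{x_h}X\rangle$ has the maximal dimension $\min\{hn+h-1,N\}$; in the range $hn+h-1\le N$ that governs identifiability this means the affine tangent spaces $\widehat T_{x_1}X,\dots,\widehat T_{x_h}X$ are in direct sum. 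Two facts follow at once: the $x_i$ are linearly independent, so $\sec_h(X)^{o}$ is the union of the spans of \emph{all} $h$-tuples of distinct points; and if $p\in\langle x_1,\dots,x_h\rangle$ with $p\notin\sec_{h-1}(X)^{o}$, then writing $p=\sum_i t_iv_i$ for lifts $v_i$ of $x_i$ forces every $t_i\neq0$, since a vanishing coefficient would place $p$ on the span of $h-1$ independent points, i.e. in $\sec_{h-1}(X)^{o}$.

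Assume for contradiction that $(\pi_h^X)^{-1}(p)$ has a positive-dimensional component $Z$, and suppose first that $Z$ meets the locus of honest spans $\langle x_1,\dots,x_h\rangle$ of $h$ distinct points. Choosing an irreducible curve through a general such point and lifting it to a curve $s\mapsto(x_1(s),\dots,x_h(s))$ in $\Gamma_h(X)$, we get moving points with $p=\sum_i t_i(s)v_i(s)$ constant. Differentiating at $s=0$ yields
\[
0=\sum_{i=1}^{h}\bigl(t_i'(0)\,v_i+t_i(0)\,v_i'(0)\bigr),
\]
where each summand lies in $\widehat T_{x_i}X$ because $v_i,v_i'(0)\in\widehat T_{x_i}X$. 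The direct-sum property forces every summand to vanish, and since $t_i(0)\neq0$ this gives $v_i'(0)\in\langle v_i\rangle$, i.e. each $x_i$ is infinitesimally stationary; but then $\Lambda(s)=\langle x_1(s),\dots,x_h(s)\rangle$ is stationary, contradicting that it traces a curve. Equivalently, this computation is Terracini's lemma and shows that $d\pi_h^X$ has rank $hn+h-1=\dim\Sec_h(X)$ at every honest point of the fiber, so $\pi_h^X$ is unramified there; the honest part of the fiber is therefore discrete, hence finite.

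It follows that any positive-dimensional component $Z$ must lie entirely in the degeneration locus $B\subset\Sec_h(X)$ over the boundary of $\mathcal S_h(X)$ (the complement of the dense open locus of honest spans): if $Z$ met the honest locus it would meet this dense open in a dense open, hence positive-dimensional, subset, contradicting the finiteness just proved. Excluding such $Z$ is the main obstacle, because over a boundary plane several of the $x_i$ coincide, $\Sec_h(X)$ may be singular, and the naive derivative computation no longer applies verbatim. I would handle it by stratifying $\Gamma_h(X)$ according to the coincidence pattern of the $x_i$ and running the differential argument on each stratum. First note that $T_h(X)=\varnothing$ implies $T_j(X)=\varnothing$ for every $j\le h$: if $x_1,\dots,x_{h-1}$ had deficient tangent span, then adjoining a generic $x_h$ keeps it deficient because $\dim\langle T_{x_1}X,\dots,T_{x_h}X\rangle\le\dim\langle T_{x_1}X,\dots,T_{x_{h-1}}X\rangle+(n+1)\le hn+h-2$, so $\{x_1,\dots,x_h\}\in T_h(X)$; iterating gives the claim. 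Consequently the direct-sum property holds for any collection of at most $h$ distinct points and for the limiting tangent data recorded by a boundary plane, which is then contained in a direct sum of $\le h$ affine tangent spaces; the condition $p\notin\sec_{h-1}(X)^{o}$ again guarantees that $p$ genuinely uses the limiting tangent directions, and differentiating the constant expression for $p$ along a curve in the stratum forces the same vanishing and stationarity as before. Carrying out this limiting/stratified analysis cleanly—checking that the coefficients remain nonzero and that the tangent spaces specialize as claimed—is the delicate technical point, while the honest-locus computation is the conceptual core of the proof.
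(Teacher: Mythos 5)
Your ``conceptual core'' is, word for word, the paper's entire proof. The paper argues in two lines: if $(\pi_h^X)^{-1}(p)$ has positive dimension, then the image of the differential of $\pi_h^X$ at a point $(p,\langle x_1,\dots,x_h\rangle)$ of the fiber has dimension smaller than $hn+h-1$; since that image contains $T_{x_1}X,\dots,T_{x_h}X$, the set $\{x_1,\dots,x_h\}$ lies in $T_h(X)$, a contradiction. Your explicit differentiation of $p=\sum_i t_i(s)v_i(s)$ along a curve in the fiber, together with the direct-sum property of the affine tangent cones forced by $T_h(X)=\emptyset$, is exactly this Terracini computation written out by hand, and your remark that the whole statement implicitly lives in the range $hn+h-1\le N$ is correct and more careful than the paper's phrasing. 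Where you diverge is in worrying about positive-dimensional components of the fiber supported entirely over degenerate limit planes of $\mathcal{S}_h(X)$: this concern is legitimate if one wants finiteness of the full fiber inside $\Sec_h(X)$, but the paper performs no such stratified analysis --- it simply takes a plane of the form $\langle x_1,\dots,x_h\rangle$ in the fiber and runs the differential argument there. So the part of your proposal that you leave as an unexecuted sketch (``the delicate technical point'') is a genuine incompleteness of your write-up, but it is a gap the paper shares rather than one it closes; relative to the paper's own standard of proof, your honest-locus computation already constitutes the whole argument.
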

\begin{proof}
Let $\left\langle x_1,\dots,x_h\right\rangle$ be an $h$-plane containing $p$ with $x_1,\dots,x_h\in X$. Assume that $(\pi_h^X)^{-1}(p)$ has positive dimension. Then the image of the differential of $\pi_h^X$ at $p$ has dimension smaller that $\dim(X)h+h-1$. Since $T_{x_1}X,\dots,T_{x_h}X$ are contained in the image of the differential of $\pi_h^X$ at $p$ we conclude that $\{x_1\dots,x_h\}\in T_h(X)$. 
\end{proof}

\begin{Corollary}\label{Col1}
If $T_h(X)$ is empty, $X$ is $h$-identifiable and $\sec_h(X)^{o}\setminus \sec_{h-1}(X)^{o}$ is normal then any point $p\in \sec_h(X)^{o}\setminus \sec_{h-1}(X)^{o}$ is $h$-identifiable.
\end{Corollary}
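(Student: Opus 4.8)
The idea is to feed the finiteness of the fibers provided by Proposition \ref{nid} into Zariski's Main Theorem, using normality to upgrade ``finite fiber'' to ``reduced point''. Set $U:=\sec_h(X)^{o}\setminus\sec_{h-1}(X)^{o}$, let $V:=(\pi_h^X)^{-1}(U)\subseteq\Sec_h(X)$ be its preimage, and write $f:=\pi_h^X|_V\colon V\to U$. Since $X$ is $h$-identifiable the morphism $\pi_h^X$ is birational; as $U$ is a dense open subset of $\sec_h(X)$, the open set $V$ is dense in the irreducible variety $\Sec_h(X)$ and $f$ is again birational and dominant.

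The first step is to observe that $f$ is proper. Indeed $\mathcal{I}_h$ is closed in $\P^N\times\G(h-1,N)$, hence projective, and $\Sec_h(X)=(\psi_h^X)^{-1}(\mathcal{S}_h(X))$ is closed in $\mathcal{I}_h$; therefore the projection $\pi_h^X\colon\Sec_h(X)\to\sec_h(X)$ is proper. Properness is preserved under base change, so its restriction $f\colon V\to U$ along the open immersion $U\hookrightarrow\sec_h(X)$ is proper as well. By Proposition \ref{nid}, the hypothesis $T_h(X)=\varnothing$ forces every fiber of $f$ over a point of $U$ to be finite, so $f$ is quasi-finite. A proper quasi-finite morphism is finite, whence $f$ is a finite birational morphism onto the normal variety $U$ with $V$ integral. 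Pushing forward, $f_*\O_V$ is a coherent $\O_U$-algebra, integral over $\O_U$ and contained in the common function field; since $U$ is normal, integral closedness gives $f_*\O_V=\O_U$, i.e.\ $f$ is an isomorphism.

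It follows that for every $p\in U$ the scheme-theoretic fiber $(\pi_h^X)^{-1}(p)$ is a single reduced point, so it has degree one and $p$ is $h$-identifiable in the sense of Remark \ref{SI}, as claimed. The only genuinely delicate point is this last implication: Proposition \ref{nid} alone only yields finitely many points in the fiber, and it is exactly the normality of $U$---through the fact that a finite birational morphism onto a normal base is an isomorphism---that rules out zero-dimensional fibers of degree bigger than one. I therefore expect the careful bookkeeping of properness and its stability under restriction to $U$, together with this normality argument, to be the crux of the proof.
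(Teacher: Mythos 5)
Your argument is correct and follows essentially the same route as the paper's own proof: birationality from $h$-identifiability, finiteness of fibers over $\sec_h(X)^{o}\setminus\sec_{h-1}(X)^{o}$ from Proposition \ref{nid}, and normality to rule out fibers of degree greater than one. The paper compresses the final step into the single observation that a finite fiber of degree $>1$ under a birational morphism would contradict normality at $p$; your expansion via properness, quasi-finiteness and the finite-birational-onto-normal criterion is just the detailed version of that same argument.
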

\begin{proof}
Since $X$ is $h$-identifiable $\pi_h^X$ is birational, and since $T_h(X)$ is empty Proposition \ref{nid} yields that $\pi_h^X$ is finite over $\sec_h(X)^{o}\setminus \sec_{h-1}(X)^{o}$. To conclude it is enough to note that if the fiber $(\pi_h^X)^{-1}(p)$ over $p \in\sec_h(X)^{o}\setminus \sec_{h-1}(X)^{o}$ has degree bigger that one then $\sec_h(X)^{o}$ is not normal at $p$. 
\end{proof}

\begin{Corollary}
Let $X\subset\mathbb{P}^N$ be a smooth projective variety of dimension $n$ embedded by a line bundle $L = \omega_X\otimes A^{2(n+1)}\otimes B$, where $A$ is very ample and $B$ is nef. If $T_2(X)$ is empty and $X$ is $2$-identifiable then any point $p\in \sec_2(X)\setminus X$ is $2$-identifiable.
\end{Corollary}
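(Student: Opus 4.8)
The plan is to reduce the statement to Corollary \ref{Col1}. For $h=2$ one has $\sec_1(X)^o=X$, so the hypotheses of that corollary read: $T_2(X)$ empty, $X$ $2$-identifiable, and $\sec_2(X)^o\setminus X$ normal. The first two hold by assumption, so everything comes down to (i) establishing that $\sec_2(X)$ is normal away from $X$, and (ii) upgrading the conclusion of Corollary \ref{Col1} from the honest secant locus $\sec_2(X)^o\setminus X$ to all of $\sec_2(X)\setminus X$.

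Step (i) is where the shape of $L$ is used, and I expect it to be the main obstacle. The bundle $L=\omega_X\otimes A^{2(n+1)}\otimes B$ is adjoint, twisted by $2(n+1)$ copies of a very ample $A$ and a nef $B$; this is exactly the positivity under which the secant variety of a smooth embedded variety is known to be well behaved, and I would invoke such a normality result to conclude that $\sec_2(X)$ is smooth along $\sec_2(X)\setminus X$, i.e. $\Sing(\sec_2(X))\subseteq X$. The point of taking so many copies of $A$ is to separate points together with their tangent directions, so that over any honest secant line meeting $X$ in two distinct points, and over the limiting tangent lines, the secant variety acquires no singularity away from the vertex locus $X$. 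This is a cohomological and syzygetic input rather than part of the elementary Terracini calculus of the earlier results, which is why I single it out as the hard part. Granting it, $\sec_2(X)\setminus X$ is smooth, hence normal, and so is its open subset $\sec_2(X)^o\setminus X$.

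With normality established, Corollary \ref{Col1} applies directly and yields that every $p\in\sec_2(X)^o\setminus X$ is $2$-identifiable. For step (ii) there remain the points of $\sec_2(X)\setminus X$ that lie on no honest secant, namely the limit points on tangent lines $T_xX$ with $x\in X$; these genuinely occur under the hypothesis (already for a rational normal cubic, where $\sec_2(X)=\P^3$ but the tangent surface off $X$ is not swept by honest chords) and must be handled. Here I would argue by Zariski's Main Theorem. The morphism $\pi_2^X\colon\Sec_2(X)\to\sec_2(X)$ is proper, and it is birational because $X$ is $2$-identifiable; over $\sec_2(X)^o\setminus X$ its fibres are finite by Proposition \ref{nid} (using $T_2(X)=\emptyset$), and the positivity of $L$ prevents $\pi_2^X$ from contracting anything over the remaining tangential boundary, so the fibres stay finite over the whole of $\sec_2(X)\setminus X$. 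A proper, quasi-finite, birational morphism onto the normal variety $\sec_2(X)\setminus X$ is an isomorphism, so $\pi_2^X$ restricts to an isomorphism over $\sec_2(X)\setminus X$. Consequently $(\pi_2^X)^{-1}(p)$ is a single reduced point for every $p\in\sec_2(X)\setminus X$, whether or not $p$ lies on an honest secant, which is precisely $2$-identifiability. The delicate point to check in (ii) is exactly this finiteness of fibres over the tangential boundary, where Proposition \ref{nid} does not literally apply and the positivity input of step (i) must be used once more.
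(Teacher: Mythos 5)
Your overall strategy coincides with the paper's: the positivity of $L$ is used only to import a normality theorem for $\sec_2(X)$ (the paper cites \cite[Corollary C]{Ull16}), Corollary \ref{Col1} then settles the honest secant locus, and the points lying only on tangent lines are handled by playing normality against birationality. The one place where your argument is not actually a proof is the step you yourself flag: the finiteness of $(\pi_2^X)^{-1}(p)$ for $p$ on the tangential boundary. You propose to extract this from ``the positivity of $L$'', but no such implication is available or needed; positivity has already been spent on normality, and an embedding can be arbitrarily positive while the tangential surface still carries points with infinitely many secant lines if $T_2(X)$ is allowed to be non-empty. The correct source of finiteness is the hypothesis $T_2(X)=\emptyset$, via a short case analysis that you are missing.

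Concretely, let $p$ lie on a line tangent to $X$ at $x$ and suppose some other line $L'$ of $\mathcal S_2(X)$ passes through $p$. Every element of $\mathcal S_2(X)$ is either a proper secant or a tangent line, so there are two cases. If $L'$ is tangent at some $x'\neq x$, then $p\in T_xX\cap T_{x'}X$, so $\{x,x'\}\in T_2(X)$, a contradiction; this also rules out a positive-dimensional family of tangent lines through $p$, since those tangent at a fixed $x$ contribute at most the single line $\langle p,x\rangle$. If $L'$ is a proper secant through $x_1',x_2'\in X$, then the differential argument of Proposition \ref{nid} does apply, run at the pair $\{x_1',x_2'\}$ rather than at $p$'s tangency point: a positive-dimensional fiber would force $\{x_1',x_2'\}\in T_2(X)$. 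Hence the fiber over $p$ is finite of degree at least two, and since $\pi_2^X$ is birational this contradicts the normality of $\sec_2(X)$ at $p$. So the fiber is the single point corresponding to the tangent line itself, which is the $2$-identifiability of $p$. With this case analysis inserted in place of the appeal to positivity, your argument closes and agrees with the paper's. (A minor inaccuracy: you claim $\Sing(\sec_2(X))\subseteq X$; the cited result gives normality of $\sec_2(X)$, not smoothness off $X$, but normality is all that either argument uses.)
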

\begin{proof}
By \cite[Corollary C]{Ull16} $\sec_2(X)$ is normal. Hence, Corollary \ref{Col1} yields that any point $p\in \sec_2(X)^{o}\setminus X^{o}$ is $2$-identifiable. Now, let $p\in \sec_2(X)\setminus X$ be a point lying on a line $L$ tangent to $X$ at $x\in X$. Assume that the is another line $L'$ through $p$ that is secant to $X$. If $L'$ is tangent to $X$ at a point $x'$ then $T_xX\cap T_{x'}X\neq\emptyset$ and hence $T_2(X)\neq\emptyset$. If $L'$ is a proper secant then arguing as in the proof of Proposition \ref{nid} we get that the fiber $(\pi_2^X)^{-1}(p)$ is finite and hence $\sec_2(X)$ is not normal at $p$ contradicting \cite[Corollary C]{Ull16}. 
\end{proof}

\begin{Proposition}\label{h-2h}
Let $X\subset\mathbb{P}^N$ be a smooth projective variety of dimension $n$. If $T_{2h}(X)$ is empty then any point of $\sec_h(X)\setminus \sec_{h-1}(X)$ is $h$-identifiable. Furthermore, $\Sing(\sec_h(X))\subset \sec_{h-1}(X)$.
\end{Proposition}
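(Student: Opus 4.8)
The plan is to argue by contradiction at the level of point configurations: if some $p\in\sec_h(X)\setminus\sec_{h-1}(X)$ fails to be $h$-identifiable, or is a singular point of $\sec_h(X)$, I will produce $2h$ distinct points of $X$ whose tangent spaces span a linear space of less than the expected dimension, contradicting $T_{2h}(X)=\emptyset$. Here and below, for a finite set $S\subseteq X$ I abbreviate $\langle T_S\rangle=\langle T_{x}X : x\in S\rangle$.

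First I would observe that $T_{2h}(X)=\emptyset$ forces $T_m(X)=\emptyset$ for every $m\le 2h$: if $m$ distinct points $S$ had $\dim\langle T_S\rangle\le mn+m-2$, padding $S$ with $2h-m$ generic points of $X$ gives $2h$ distinct points whose tangent span has dimension at most $(mn+m-2)+((2h-m)n+(2h-m)-1)+1=2hn+2h-2$, which is deficient (in the range $2hn+2h-1\le N$). In particular $\dim\langle T_S\rangle=hn+h-1$ for every set $S$ of $h$ distinct points, and Proposition \ref{nid} already guarantees that $\pi_h^X$ has finite fibres over $\sec_h(X)^{o}\setminus\sec_{h-1}(X)^{o}$; so a failure of identifiability can only come from two distinct $(h-1)$-planes through $p$ (an isolated pair or a positive-dimensional family), together with a possible failure of reducedness of the fibre.

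The key computation is an intersection estimate. Let $p\notin\sec_{h-1}(X)$ lie on two distinct $(h-1)$-planes $\langle A\rangle$ and $\langle B\rangle$, each spanned by $h$ points of $X$, and suppose first that $A\cap B=\emptyset$, so $W=A\cup B$ is a set of $2h$ distinct points with $\langle T_W\rangle=\langle T_A,T_B\rangle$. Since $p\in\langle A\rangle\cap\langle B\rangle\subseteq\langle T_A\rangle\cap\langle T_B\rangle$, the two tangent spans meet, and the projective dimension formula gives
$$\dim\langle T_W\rangle=\dim\langle T_A\rangle+\dim\langle T_B\rangle-\dim(\langle T_A\rangle\cap\langle T_B\rangle)\le 2(hn+h-1)=2hn+2h-2,$$
so $W\in T_{2h}(X)$, a contradiction. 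Thus the $(h-1)$-plane through $p$ is unique; since $T_h(X)=\emptyset$ the differential of $\pi_h^X$ at the corresponding point has image all of $\langle T_A\rangle$, of dimension $hn+h-1=\dim\Sec_h(X)$, and (using smoothness of $\Sec_h(X)$ at this point) it is injective, so $\pi_h^X$ is an immersion there, the fibre is a single reduced point, and $p$ is $h$-identifiable. The same injectivity shows $\pi_h^X$ is a local isomorphism onto its image near the unique preimage, whence $\sec_h(X)$ is smooth at $p$ with $T_p\sec_h(X)=\langle T_A\rangle$; this yields $\Sing(\sec_h(X))\subseteq\sec_{h-1}(X)$.

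The step I expect to cost the most work is upgrading this clean ``disjoint interior'' case to the full statement. When the two decompositions overlap in $c\ge 1$ points the set $W$ has only $2h-c$ elements and the crude bound above merely meets the expected dimension; to regain strict deficiency one must bound $\langle T_A\rangle\cap\langle T_B\rangle$ from below using both $p$ and $\langle T_{A\cap B}\rangle$, which reduces to excluding the borderline case $p\in\langle T_{A\cap B}\rangle$ --- and when that occurs $p$ lies on a tangential locus that must be pushed back into $\sec_{h-1}(X)$. Secondly, boundary points $p\in\sec_h(X)\setminus\sec_h(X)^{o}$ possess only limiting decompositions, given by length-$h$ subschemes rather than $h$ reduced points; this is exactly where doubling to $2h$ is forced, and treating it rigorously means working on $\Sec_h(X)$ and specializing, as well as checking smoothness of $\Sec_h(X)$ at the relevant points (already needed for reducedness of the fibre above). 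Finally, the whole argument detects deficiency through $T_{2h}(X)$ only in the range $2hn+2h-1\le N$, so the complementary $\sec_{2h}(X)=\mathbb{P}^N$ range deserves a separate remark. I expect the overlapping-support and boundary analyses to be the genuine obstacles.
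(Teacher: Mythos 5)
Your proposal is correct and follows essentially the same route as the paper: two distinct $(h-1)$-planes through $p\notin\sec_{h-1}(X)$ force their tangent spans to meet in $p$, so the union of the supports is a deficient configuration of at most $2h$ points, contradicting $T_{2h}(X)=\emptyset$ (using that emptiness of $T_{2h}$ implies emptiness of $T_{h'}$ for $h'\le 2h$), and smoothness off $\sec_{h-1}(X)$ then follows because $T_h(X)=\emptyset$ makes $\pi_h^X$ a submersion as in Proposition~\ref{nid}. The two difficulties you flag --- overlapping supports and boundary points supported on non-reduced degree-$h$ schemes (handled in the paper by taking supports of size $a,b\le h$ and invoking $T_{a+b}(X)\ne\emptyset$), as well as the range $2hn+2h-1\le N$ --- are present in the paper's argument too and are treated there only implicitly, so your version is, if anything, more candid about where the work lies.
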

\begin{proof}
Let $p\in  \sec_h(X)\setminus \sec_{h-1}(X)$ and assume that there are two $(h-1)$-planes $H,H'$ through $p$ intersecting $X$ in schemes of dimension zero and degree $h$ supported respectively on $\{x_1,\dots,x_a\}$ and $\{x_1',\dots,x_b'\}$.

Then $H\subset\left\langle T_{x_1}X,\dots,T_{x_a}X\right\rangle$ and $H'\subset\left\langle T_{x_1'}X,\dots,T_{x_a'}X\right\rangle$ yield that $p\in \left\langle T_{x_1}X,\dots,T_{x_a}X\right\rangle \cap \left\langle T_{x_1'}X,\dots,T_{x_a'}X\right\rangle$ and hence $T_{a+b}(X)\neq \emptyset$. To conclude it is enough to note that $a+b\leq 2h$ and that $T_{2h}(X) = \emptyset$ implies  $T_{h'}(X) = \emptyset$ for all $h' \leq 2h$.

Therefore, $\pi_h^X$ is $1$-to-$1$ over $\sec_h(X)\setminus \sec_{h-1}(X)$ and since $T_h(X) = \emptyset$, arguing as in Proposition \ref{nid}, we have that $\pi_h^X$ is a submersion over $\sec_h(X)\setminus \sec_{h-1}(X)$ and hence $\sec_h(X)\setminus \sec_{h-1}(X)$ is smooth. 
\end{proof}

Thanks to the theory developed in \cite[Section 2]{MM22} it is possible to relate Terracini loci to the Bronowski's conjecture \cite[Remark 4.6]{CR06} which has been proved false and amended, by requiring the non degeneracy of the Gauss map of $X$, in \cite[Theorem 1.3, Conjecture 1.4]{MM22}.

\begin{Notation}
Let $X\subset\p^N$ be an irreducible and non-degenerate variety. A general $h$-tangential projection of $X$ is a linear projection $\tau_{x_1,\dots,x_h}^X:X\dasharrow X_{h}\subset\mathbb{P}^{N_{h}}$ from the linear span of $h$ tangent
spaces $\left\langle T_{x_1}X,\dots, T_{x_h}X\right\rangle$ where
$x_1,\dots,x_h\in X$ are general points. When there will be no danger of confusion we will denote a general $h$-tangential projection $\tau_{x_1,\dots,x_h}^X$ simply by $\tau_{h}^X$.
\end{Notation}

\begin{Notation}\label{TR}
Consider the map
$\pi_{h+1}^X:\Sec_{h+1}(X)\rightarrow\sec_{h+1}(X)\subset\mathbb{P}^N$. For
a general point $p\in\sec_{h}(X)\subset\sec_{h+1}(X)$ we split the fiber
$\pi_{h+1}^X(p)$ in two parts $T_p^{h},R_p^h$ defined as
follows:  
\begin{itemize}
\item[-] the general point of  the trivial part $T_p^{h}$ is a pair
  $(p,\Lambda)$ where $\Lambda$ is an $h$-plane $(h+1)$-secant to $X$
  of the form $\Lambda = \left\langle x,\Lambda'\right\rangle$ with $x
  \in X$ and $\Lambda'$ an $(h-1)$-plane $h$-secant to $X$ and passing
  through $p$; 
\item[-] the residual part $R_p^{h}$ is the closure of the complement of $T_p$ in $(\pi_{h+1}^X)^{-1}(p)$. 
\end{itemize}
\end{Notation}

The following is the revised version of Bronowski's conjecture in \cite[Conjecture 1.4]{MM22}.

\begin{Conjecture}\label{con:Bro_mio}
Let $X\subset\mathbb{P}^{N}$ be an irreducible and non-degenerate variety with non degenerate Gauss map. The variety $X$ is $h$-identifiable if and only if a general $(h-1)$-tangential projection $\tau_{h-1}^X:X\dasharrow X_{h-1}\subset\mathbb{P}^{N_{h-1}}$ is birational. 
\end{Conjecture}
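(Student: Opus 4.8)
The plan is to prove the two implications separately, in both cases through the Terracini-type recursion relating the $h$-secant variety of $X$ to the variety $X_{h-1}=\tau_{h-1}^X(X)$ obtained by a general $(h-1)$-tangential projection, which maps $\sec_h(X)$ dominantly onto $X_{h-1}$. The basic observation is that if $x_1,\dots,x_{h-1}\in X$ are the general points defining $\tau_{h-1}^X$ and $p\in\langle x_1,\dots,x_{h-1},x_h\rangle$ for a further point $x_h\in X$, then the center $\langle T_{x_1}X,\dots,T_{x_{h-1}}X\rangle$ contains $x_1,\dots,x_{h-1}$, so that $\tau_{h-1}^X(p)=\tau_{h-1}^X(x_h)$. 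Hence the decompositions of $p$ that share the first $h-1$ points correspond to the fiber of $\tau_{h-1}^X$ over $\tau_{h-1}^X(x_h)$, and this is the bridge between the degree of $\pi_h^X$ and that of the tangential projection.

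For the ``only if'' direction I would argue by contrapositive. If $\tau_{h-1}^X$ is not birational onto its image then, the Gauss map of $X$ being non-degenerate, it is generically finite of degree $\geq 2$, so a general $\bar x_h=\tau_{h-1}^X(x_h)$ has a second preimage $x_h'\neq x_h$. By the observation above, both $\langle x_1,\dots,x_{h-1},x_h\rangle$ and $\langle x_1,\dots,x_{h-1},x_h'\rangle$ pass through the general point $p$, producing two distinct $h$-tuples whose spans contain $p$. One must check that these are genuinely different $(h-1)$-planes and that the construction persists as $p$ specializes to a general point of $\sec_h(X)$; here the non-degeneracy of the Gauss map is what keeps the tangent spaces from collapsing the two decompositions. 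It follows that $\pi_h^X$ has degree $\geq 2$, i.e. $X$ is not $h$-identifiable.

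For the ``if'' direction, assume $\tau_{h-1}^X$ is birational and analyze the fibers of $\pi_h^X$ over a general point $p\in\sec_{h-1}(X)\subset\sec_h(X)$ through the trivial/residual splitting $T_p^{h-1},R_p^{h-1}$ of Notation \ref{TR}. The strategy is to show that birationality of $\tau_{h-1}^X$ forces $R_p^{h-1}=\emptyset$: every $h$-secant $(h-1)$-plane through $p$ restricts under $\tau_{h-1}^X$ to a single point of $X_{h-1}$, and birationality pins down its preimage, leaving only the trivial decompositions obtained by adjoining one free point to a decomposition of $p$. The remaining task is to propagate the recovery of this ``last'' point to the whole unordered tuple, which I would carry out by induction on $h$ together with a monodromy argument on the choice of the $h-1$ centers, the monodromy being trivial exactly when each successive tangential projection is birational.

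The main obstacle I expect is the ``if'' direction, and within it the step upgrading the recoverability of a single point of a general decomposition---which is all that birationality of one tangential projection yields directly---to the rigidity of the entire $h$-tuple. This is the true content of Bronowski's conjecture and the reason it resisted proof: one has to exclude fibers of $\pi_h^X$ consisting of several individually non-rigid $h$-tuples permuted by monodromy, a phenomenon that genuinely occurs for varieties with degenerate Gauss map and underlies the counterexamples corrected in \cite{MM22}. Controlling it forces one to use the non-degenerate Gauss map hypothesis throughout the dimension estimates for $T_p^{h-1}$ and $R_p^{h-1}$, and this---rather than any isolated computation---is where the real work lies.
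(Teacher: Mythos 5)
The statement you are trying to prove is a \emph{conjecture}: it is the amended Bronowski conjecture of \cite[Conjecture 1.4]{MM22}, and the paper does not prove it. It is restated here only so that Proposition \ref{P_Bro} can assert that it \emph{holds under the additional hypothesis} $T_{2h-1}(X)=\emptyset$; in that special case the emptiness of the Terracini locus directly forces the residual part $R_p^{h-1}$ of Notation \ref{TR} to be empty, and the conclusion is then imported wholesale from \cite[Corollary 2.18]{MM22}. So there is no ``paper's own proof'' of the general statement for your argument to be measured against.

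Your proposal, as you yourself acknowledge in the final paragraph, is not a proof: the step in the ``if'' direction that upgrades birationality of $\tau_{h-1}^X$ --- which only lets you recover one point of a decomposition relative to a fixed choice of the other $h-1$ --- to the rigidity of the entire unordered $h$-tuple is exactly the open content of the conjecture. Your plan of ``induction on $h$ together with a monodromy argument'' is not an argument but a restatement of what would need to be proved: nothing in the birationality hypothesis prevents $\pi_h^X$ from having several sheets permuted by monodromy in the base, each of which projects birationally under every general tangential projection. Showing that $R_p^{h-1}=\emptyset$ for a general $p\in\sec_{h-1}(X)$ is precisely what fails to follow from birationality alone, and it is what the hypothesis $T_{2h-1}(X)=\emptyset$ buys in Proposition \ref{P_Bro}: any residual decomposition through $p$ would produce at most $2h-1$ points whose tangent spaces fail to be in general position, contradicting the emptiness of the Terracini locus. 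If you want a provable statement, you should aim at Proposition \ref{P_Bro} rather than at the conjecture itself; the ``only if'' direction and the reduction to $R_p^{h-1}=\emptyset$ that you sketch are consistent with the machinery of \cite[Section 2]{MM22}, but they do not close the gap in the unconditional statement.
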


\begin{Proposition}\label{P_Bro}
Let $X\subset\mathbb{P}^{N}$ be an irreducible and non-degenerate variety such that $T_{2h-1}(X)$ is empty. Then $X$ has non degenerate Gauss map and Conjecture \ref{con:Bro_mio} holds true for $X$ and the integer $h$, that is $X$ is $h$-identifiable if and only if a general $(h-1)$-tangential projection $\tau_{h-1}^X:X\dasharrow X_{h-1}\subset\mathbb{P}^{N_{h-1}}$ is birational.
\end{Proposition}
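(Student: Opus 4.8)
The plan is to prove the two assertions in turn: first that the Gauss map of $X$ is non-degenerate, and then the Bronowski-type equivalence, which I would extract from the fibre analysis of Notation \ref{TR} together with the results of \cite[Section 2]{MM22}. At the key point I would use that $T_{2h-1}(X)=\emptyset$ forces $T_m(X)=\emptyset$ for every $m\le 2h-1$, the monotonicity already invoked in the proof of Proposition \ref{h-2h}.

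For the Gauss map I may assume $h\ge 2$, the case $h=1$ being trivial (the $0$-tangential projection is the identity, every variety is $1$-identifiable, and a smooth non-degenerate $X\neq\mathbb{P}^N$ has non-degenerate Gauss map by the classical characteristic-zero theorem). For $h\ge 2$ the monotonicity gives $T_2(X)=\emptyset$. Suppose the Gauss map $\gamma\colon X\dasharrow\G(n,N)$, $x\mapsto T_xX$, were degenerate. Then a general point $x_1$ would lie on a positive-dimensional fibre of $\gamma$, so there is $x_2\neq x_1$ with $T_{x_2}X=T_{x_1}X$; hence $\langle T_{x_1}X,T_{x_2}X\rangle=T_{x_1}X$ has dimension $n<\min\{2n+1,N\}$ (note $n<N$ since $X$ is non-degenerate and $X\neq\mathbb{P}^N$), so $\{x_1,x_2\}\in T_2(X)^o$, contradicting $T_2(X)=\emptyset$. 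Thus $\gamma$ is non-degenerate.

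For the equivalence I would work with $\pi_h^X\colon\Sec_h(X)\to\sec_h(X)$ and a general point $p\in\sec_{h-1}(X)\subset\sec_h(X)$, decomposing the fibre as $(\pi_h^X)^{-1}(p)=T_p^{h-1}\cup R_p^{h-1}$ as in Notation \ref{TR} (with $h$ replaced by $h-1$). Since $T_h(X)=\emptyset$, Proposition \ref{nid} shows that $\pi_h^X$ has finite fibres over $\sec_h(X)^o\setminus\sec_{h-1}(X)^o$, so $X$ is not $h$-defective; moreover $T_{2h-2}(X)=\emptyset$ and Proposition \ref{h-2h} applied to $h-1$ give that the general $p\in\sec_{h-1}(X)$ is $(h-1)$-identifiable, so its trivial part $T_p^{h-1}$ is the irreducible $n$-dimensional family obtained by letting the free point $x\in X$ vary, the remaining $(h-2)$-plane $\Lambda'$ being the unique $(h-1)$-secant $(h-2)$-plane through $p$. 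With this normalization the machinery of \cite[Section 2]{MM22} yields two dictionary statements: $X$ is $h$-identifiable if and only if $R_p^{h-1}=\emptyset$ for general $p$, and the general $(h-1)$-tangential projection $\tau_{h-1}^X$ is birational if and only if $R_p^{h-1}=\emptyset$, the points of $R_p^{h-1}$ being exactly the residual preimages under $\tau_{h-1}^X$.

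The content, and the main obstacle, is to certify that this dictionary is not corrupted by coincidences among tangent spaces. A nontrivial point of $R_p^{h-1}$ produces a second $h$-secant $(h-1)$-plane to $X$ through a general point of $\sec_h(X)$, and confronting it with the $h-1$ points defining $\tau_{h-1}^X$ yields, exactly as in the proof of Proposition \ref{h-2h}, a set of at most $(h-1)+h=2h-1$ distinct points of $X$ whose tangent spaces fail to span the expected linear space; emptiness of $T_{2h-1}(X)$ excludes this, which is precisely why $2h-1$ is the right threshold. This forces the residual contributions to be genuine extra fibres of $\tau_{h-1}^X$, and conversely, so the two dictionary statements match and give that $X$ is $h$-identifiable if and only if $R_p^{h-1}=\emptyset$, if and only if $\tau_{h-1}^X$ is birational, establishing Conjecture \ref{con:Bro_mio} for $X$ and $h$. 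I expect this $2h-1$ bookkeeping to be the crux; the non-degeneracy of the Gauss map from the first step is what keeps the tangential projections generically finite and the count meaningful.
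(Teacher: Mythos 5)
Your proposal follows essentially the same route as the paper: the Gauss map claim is deduced from $T_2(X)=\emptyset$ exactly as in the paper's first paragraph, and the heart of the argument --- using $T_{2h-1}(X)=\emptyset$ to show that the residual part $R_p^{h-1}$ of the fibre over a general $p\in\sec_{h-1}(X)$ is empty, via the count of at most $(h-1)+h=2h-1$ distinct points whose tangent spaces are forced into degenerate position --- is precisely the paper's argument, after which both you and the paper hand off to \cite[Section 2]{MM22} (the paper cites \cite[Corollary 2.18]{MM22}). One correction is needed in that hand-off: your two ``dictionary statements'' are not what \cite{MM22} provides. It is not true in general that $X$ is $h$-identifiable iff $R_p^{h-1}=\emptyset$, nor that $\tau_{h-1}^X$ is birational iff $R_p^{h-1}=\emptyset$ (the tangential projection can have degree greater than one even when the residual part vanishes); taken literally, your two equivalences combined with the vanishing of $R_p^{h-1}$ would make both $h$-identifiability and birationality of $\tau_{h-1}^X$ hold unconditionally under the hypothesis, which is stronger than, and different from, the stated equivalence. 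The correct input is that emptiness of $R_p^{h-1}$ for general $p$ implies the \emph{equivalence} ``$X$ is $h$-identifiable $\Leftrightarrow$ $\tau_{h-1}^X$ is birational'', which is exactly \cite[Corollary 2.18]{MM22}. With that citation repaired your argument coincides with the paper's; the auxiliary steps you insert (finiteness of the fibres via Proposition \ref{nid}, $(h-1)$-identifiability of the general $p$ via Proposition \ref{h-2h} applied to $h-1$) are not needed for the paper's proof.
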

\begin{proof}
First, note that the Gauss map of $X$ is degenerate if and only if a general tangent space of $X$ is tangent to $X$ along a positive dimensional subvariety and in this case $T_{2}(X)$ is non empty. 

Let $p\in \sec_{h-1}(X)$ be a general point, and consider an $(h-2)$-plane $\left\langle x_1,\dots,x_{h-1}\right\rangle$ through $p$ with $x_1,\dots,x_{h-1}\in X$. Assume that the residual part $R_p^{h-1}$ is non empty. Then there is an $(h-1)$-plane $\left\langle x_1',\dots,x_{h}'\right\rangle$ through $p$ with $x_1',\dots,x_{h}'\in X$ and at least two of the $x_i'$ do not belong to $\{x_1,\dots,x_{h-1}\}$. Therefore, $T_{2h-1}(X)\neq \emptyset$ contradicting the hypotheses. Therefore, $R_p^{h-1}$ is empty and to conclude it is enough to apply \cite[Corollary 2.18]{MM22}. 
\end{proof}

\section{Ample bodies and toric varieties}\label{TVAB}
Let $N$ be a rank $n$ free abelian group, $M := {\rm Hom}(N,\mathbb Z)$ its dual and $M_{\mathbb Q} := M\otimes_{\mathbb Z}\mathbb Q$ the corresponding rational vector space. Let $P\subseteq M_{\mathbb Q}$ be a full-dimensional lattice polytope, that is the convex hull of finitely many points in $M$ which do not lie on a hyperplane. The polytope $P$ defines a polarized pair $(X_P,H)$ consisting of the toric variety $X_P$ together with a very ample Cartier
divisor $H$ of $X_P$. More precisely $X_P$ is the Zariski closure of the image of
the monomial map
\stepcounter{thm}
\begin{equation}\label{par}
\begin{array}{lccc}
\phi_P: & (k^*)^n & \longrightarrow & \mathbb{P}^{N}\\
& u & \mapsto & [\chi^m(u)\, :\, m\in P\cap M]
\end{array}
\end{equation}
where $P\cap M = \{m_0,\dots,m_N\}$, $\chi^m(u)$ denotes the Laurent monomial in the variables $(u_1,\dots,u_n)$ defined by the point $m$, and $H$ is a hyperplane section of $X_P$.

\begin{Lemma}\label{spec}
Let $X$ be a smooth projective toric
variety with one-parameter subgroup 
lattice $N$ and let $p_1,p_2\in X$ 
be two distinct points.
Then there exists $v\in N$ such 
that the two limits
\[
 q_1 := \lim_{t\to 0}t^v\cdot p_1,
 \qquad
 q_2 := \lim_{t\to 0}t^v\cdot p_2
\]
are two distinct points which are either 
torus invariant or lie on a 
torus invariant curve.
\end{Lemma}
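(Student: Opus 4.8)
The plan is to exploit the limit behaviour of one-parameter subgroups on a complete toric variety, taking a \emph{generic} $v$ first and then, if necessary, refining the choice inside a single affine chart. I will use the following standard facts about the smooth complete fan $\Sigma\subseteq N_{\mathbb Q}$ of $X$: every maximal cone $\sigma$ is generated by a basis $e_1,\dots,e_n$ of $N$, so the associated chart is $U_\sigma\cong\mathbb{A}^n$ with coordinates $x_1,\dots,x_n$ dual to the $e_i$; the limit $\lim_{t\to0}t^v\cdot p$ exists for every $p$ and every $v$ (completeness) and is fixed by the induced $\mathbb{G}_m$-action; a $v$ in the relative interior of $\sigma$ acts on $U_\sigma$ by $x_i\mapsto t^{\langle e_i^{*},v\rangle}x_i$ with all exponents positive, so the attracting set of the torus-fixed point $x_\sigma$ is exactly $U_\sigma$; and $X\setminus U_\sigma$ is closed and torus-invariant.

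First I would fix a generic $v_0\in N$ lying in the interior of some maximal cone and off the finitely many hyperplanes spanned by the walls appearing in the stars of the (at most two) cones whose orbits contain $p_1$ and $p_2$. For such $v_0$ the images of $v_0$ in the relevant quotient lattices are again generic, hence land in the interiors of maximal cones of the star fans, and both limits $q_i^{(0)}:=\lim_{t\to0}t^{v_0}\cdot p_i$ are torus-fixed points. If $q_1^{(0)}\neq q_2^{(0)}$ we are finished: both limits are torus invariant and distinct, and we take $v=v_0$.

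The remaining case is $q_1^{(0)}=q_2^{(0)}=x_\sigma$ for a single maximal cone $\sigma=\Cone(e_1,\dots,e_n)$. Here the identification of the attracting set forces $p_1,p_2\in U_\sigma\cong\mathbb{A}^n$. For any index $j$ consider the integral vector $v=\sum_{i\neq j}e_i$, which lies in the relative interior of the wall $\Cone(e_i:i\neq j)$. Since $U_\sigma$ is invariant, the limit may be computed inside $\mathbb{A}^n$: $v$ acts by $x_i\mapsto t\,x_i$ for $i\neq j$ and fixes $x_j$, so $\lim_{t\to0}t^v\cdot p$ is the point of the $j$-th coordinate axis $\{x_i=0:i\neq j\}$ whose $j$-th coordinate equals $x_j(p)$. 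This axis is the trace on $U_\sigma$ of the torus-invariant curve attached to the wall $\Cone(e_i:i\neq j)$. Because $p_1\neq p_2$ in $\mathbb{A}^n$, some coordinate differs, say $x_j(p_1)\neq x_j(p_2)$; choosing this $j$ produces two distinct limit points lying on a single torus-invariant curve, as required.

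The delicate step is precisely this last case. A generic direction $v$ necessarily collapses $p_1$ and $p_2$ to the \emph{same} fixed point once they share an affine chart (in particular whenever they lie in a common torus orbit of dimension $\geq 1$), so one cannot hope to separate them into two fixed points; instead one must flow along a wall, retaining exactly one coordinate that distinguishes the points. The crux is therefore to verify that moving in the wall direction $\sum_{i\neq j}e_i$ lands both points on one and the same torus-invariant curve while keeping them distinct — which reduces, thanks to smoothness and the explicit monomial coordinates on $U_\sigma$, to the elementary observation that $p_1\neq p_2$ guarantees a non-collapsed coordinate. The remaining computations (existence of $v_0$ avoiding finitely many rational hyperplanes, and the limit formulae) are routine.
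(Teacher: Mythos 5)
Your proof is correct and follows essentially the same strategy as the paper's: first try a generic one-parameter subgroup to land on distinct fixed points, and otherwise reduce to the case where $p_1,p_2$ lie in a common invariant chart $U_\sigma\cong\mathbb{A}^n$ and flow along a wall direction (fixing the one coordinate that distinguishes the points) so that the limits stay distinct on a torus-invariant curve. The only cosmetic difference is how the reduction to a common chart is justified — you use the attracting set of the common limit point, while the paper intersects the orbit closures and picks a fixed point in the intersection — but the core argument is identical.
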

\begin{proof}
Let $T$ be the big torus of $X$ and 
let $X_i$ be the Zariski closure of
the orbit $T\cdot p_i$. Recall that 
both $q_1$ and $q_2$ are invariant 
fixed points for a general $v\in N$.
If $X_1\cap X_2$ is empty, then for 
such a general $v\in N$ we get two
distinct invariant points.
Assume now that $X_1\cap X_2$ is 
non empty. Then there exists an 
invariant point $q\in X_1\cap X_2$.
Let $U\subseteq X$ be an open invariant
affine subset which contains $q$.
Since $U$ has non-empty intersection 
with $T\cdot p_i$ it follows that
$p_i\in U$. 

The above analysis 
allows one to reduce to the case where 
both $p_1$ and $p_2$ are contained in 
an open affine subset $U\simeq
 k^n$ with the standard action
$(k^*)^n\times k^n\to
 k^n$ given by $(t_1,\dots,t_n)
\cdot (x_1,\dots,x_n) = (t_1x_1,\dots,t_nx_n)$.
Since $p_1$ and $p_2$ are distinct they
must differ for at least one coordinate, say the first one.
Choosing $v = (0,1,\dots,1)$
one has $t^v\cdot (x_1,x_2,\dots,x_n)
= (x_1,tx_2,\dots,tx_n)$ so that 
the two limits $q_1$ and $q_2$ remain distinct
and both lie on the invariant curve 
$\{x_2=\dots=x_n=0\}$.
\end{proof}

\begin{Definition}
Let $P\subseteq M_{\mathbb Q}$ be a lattice
polytope. The {\em length} of $P$ is
\[
 \ell(P)
 :=
 \min\{|L\cap M|-1\, | \, L
 \text{ is a 1-dimensional face of }P\}.
\]
\end{Definition}

\begin{Proposition}\label{TC2}
Let $P\subseteq M_{\mathbb Q}$ be a full 
dimensional lattice polytope such that 
the corresponding projective toric variety
$X_P\subseteq\mathbb P^{|P\cap M|-1}$
is smooth and linearly normal. Then the following are equivalent:
\begin{itemize}
\item[(\textit{a})] $\ell(P)\geq 3$;
\item[(\textit{b})] the $2$-Terracini locus of $X_P$ is empty;
\item[(\textit{c})] $X_P$ does not contain conics.
\end{itemize}
\end{Proposition}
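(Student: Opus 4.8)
The plan is to establish the chain of equivalences $(a)\Leftrightarrow(b)\Leftrightarrow(c)$, exploiting the fact that the $2$-Terracini locus concerns pairs of points whose tangent spaces fail to span a linear space of the expected dimension $2n+1$, which for a smooth variety means the two tangent spaces meet. The key geometric observation I would use is that for a smooth $X\subseteq\mathbb P^N$, two tangent spaces $T_{x_1}X$ and $T_{x_2}X$ meet precisely when the two points lie on a conic (a degree-two rational curve, i.e.\ a line counted with a tangency, or a smooth conic), since the span $\langle T_{x_1}X, T_{x_2}X\rangle$ having dimension $<2n+1$ forces a linear dependence that produces such a curve through $x_1,x_2$. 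This gives the informal content of $(b)\Leftrightarrow(c)$, and the length condition $(a)$ controls conics via the edges of $P$.

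For the implication $(c)\Rightarrow(b)$, the contrapositive is the natural route: if $T_2(X_P)$ is nonempty, take a pair $\{x_1,x_2\}$ in it and show that the failure of the tangent spaces to span forces a conic through the two points. For $(b)\Rightarrow(c)$, if $X_P$ contains a conic $C$, I would pick two general points on $C$ and argue that the tangent spaces to $X_P$ at these points both meet the plane $\langle C\rangle$ in a line, forcing $\dim\langle T_{x_1}X_P, T_{x_2}X_P\rangle < 2n+1$, so the pair lies in $T_2(X_P)$. The genuinely useful tool here is Lemma \ref{spec}: rather than work with arbitrary pairs of points, it lets me degenerate any pair $\{x_1,x_2\}$ via a one-parameter subgroup to a pair $\{q_1,q_2\}$ that is either torus-invariant or lies on a single torus-invariant curve. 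Since $T_2(X_P)$ is closed and $\dim\langle T_{q_1}X_P, T_{q_2}X_P\rangle$ is upper semicontinuous under such limits, nonemptiness of the Terracini locus can be detected on these very special pairs, and conics through special pairs correspond to torus-invariant curves, i.e.\ to edges of $P$.

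The heart of the argument, and where I expect the equivalence with $(a)$ to come in, is the dictionary between $\ell(P)$ and conics on $X_P$. A torus-invariant curve is the closure of a one-dimensional orbit and corresponds to an edge (1-dimensional face) $L$ of $P$; its image in $\mathbb P^N$ is a rational normal curve of degree $|L\cap M|-1$. Such an invariant curve is a line exactly when $|L\cap M|-1 = 1$ and a conic exactly when $|L\cap M|-1 = 2$. Thus $\ell(P) \geq 3$ says precisely that every invariant curve has degree at least three, hence is neither a line nor a conic; combined with Lemma \ref{spec} reducing to invariant pairs, this should yield $(a)\Leftrightarrow(c)$ directly, and the conic dictionary then closes the loop. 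The main obstacle is the direction showing that the \emph{absence} of low-degree invariant curves rules out \emph{all} conics: an a priori conic on $X_P$ need not be torus-invariant, so I must use the degeneration of Lemma \ref{spec} together with the smoothness and linear normality of $X_P$ to ensure that a conic through a general pair degenerates to (or forces the existence of) an invariant conic, i.e.\ an edge of length $2$. Controlling this degeneration --- ensuring the limit of a conic is still a genuine conic rather than a degenerate or reducible limit that escapes the edge correspondence --- is the step requiring the most care.
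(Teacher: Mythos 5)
Your reduction via Lemma \ref{spec} and the dictionary ``edge $L$ of $P$ $\leftrightarrow$ invariant rational normal curve of degree $|L\cap M|-1$'' match the paper's strategy, but there are two genuine gaps. First, the claim on which you base $(b)\Leftrightarrow(c)$ --- that $T_{x_1}X\cap T_{x_2}X\neq\emptyset$ forces a conic through $x_1,x_2$ --- is false as a general principle: the paper exhibits, immediately after this proposition, a toric surface in $\mathbb{P}^6$ containing a rational normal \emph{quartic} along which every pair of points is a Terracini pair, precisely to show that $2$-Terracini loci ``can originate from curves of degree greater than two.'' Only the easy half (a conic through two points forces the tangent spaces to meet) is valid, and that is all the paper uses for $(b)\Rightarrow(c)$. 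The full equivalence of $(b)$ and $(c)$ is a \emph{consequence} of the cycle $(b)\Rightarrow(c)\Rightarrow(a)\Rightarrow(b)$, not an input, so you cannot lean on it.

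Second, and more seriously, the heart of the proposition is $(a)\Rightarrow(b)$, and after the reduction to special pairs you leave it unproved. You must show: (i) for two distinct torus-fixed points, the corresponding double points impose independent conditions on $|L|$ (the paper does this combinatorially: each fixed double point removes a vertex of $P$ together with its lattice-distance-one neighbours, and $\ell(P)\geq 3$ prevents two vertices from sharing such a neighbour); and (ii) for two distinct points on an invariant curve of degree $\geq 3$, the tangent spaces are disjoint. Point (ii) is exactly where smoothness and linear normality enter: smoothness guarantees the monomials $u_1,\dots,u_n$ appear in the parametrization, linear normality guarantees $u_1u_2,\dots,u_1u_n$ appear (as they lie in the convex hull of $u_1^2$ and $u_i^2$), and together with the degree-$\geq 3$ condition this produces a $(2n+2)\times(2n+2)$ minor of the Terracini matrix with determinant $(b-a)^{n+3}\neq 0$. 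You flag this as ``the step requiring the most care'' but supply no argument, and without it the implication $(a)\Rightarrow(b)$ --- hence the whole proposition --- is not established. Your worry about degenerating a non-invariant conic to an invariant one is also misplaced: one never needs to degenerate conics, only Terracini pairs, and the semicontinuity there is straightforward.
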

\begin{proof}
The implication $(b)\Rightarrow (c)$ is trivial
since tangent lines to a conic intersect.
The implication $(c)\Rightarrow (a)$ follows 
from the fact that if $X_P$ does not contain 
conics in particular it does not contain invariant conics. 
Thus any edge of $P$ has length at least three.

We now prove $(a)\Rightarrow (b)$.
Thanks to Lemma \ref{spec} it is enough to prove that $T_{x_1}X_P\cap T_{x_2}X_P = \emptyset$ for two distinct points $x_1,x_2\in X_P$ lying on an invariant curve. 
If the two points are invariant ones, we claim
that the corresponding double points impose
independent conditions on the linear system
of hyperplane sections since imposing
each invariant double point means removing from $P$ a vertex together with 
all the points at distance one from it.

Since any two vertexes have distance at least three, it follows that two vertexes can not share the 
same point at distance one, which proves the claim.

If at least one point is not invariant then both points are contained in a common invariant affine chart.
Consider the local parametrization (\ref{par}) of $X_P$. We may prove the claim for the invariant curves in this chart since for the other ones it is enough to consider a change of variables in the torus. Up to a change of variables we may write (\ref{par}) as follows:
$$
\begin{array}{lccc}
\phi_P: & (k^*)^n & \longrightarrow & \mathbb{P}^{N}\\
& (u_1,\dots,u_n) & \mapsto & [1:u_1:\dots:u_1^{d_1}:\dots:u_n:\dots:u_n^{d_n},\phi_1:\dots:\phi_{N-d_1-\dots-d_n}]
\end{array}
$$
where $\phi_i(u_1,\dots,u_n)$ is a monomial depending on at least two of the $u_j$. Note that 
\begin{itemize}
\item[(i)] since $X_P$ is smooth the monomials $u_1,\dots,u_n$ must appear in the expression of $\phi_P$;
\item[(ii)] the monomials $u_1u_2,\dots,u_1u_n$ must also appear in the expression of $\phi_P$ since $X_P\subseteq\mathbb{P}^{N}$ is linearly normal and $u_1u_i$ is in the convex hull of $u_1^2$ and $u_i^2$.
\end{itemize}
Let $C\subset X_P$ be a torus invariant curve say $C = \phi_P(\{u_2 = \dots = u_n = 0\})$, and consider two distinct points $x_1 = (1,a,\dots,a^{d_1},0,\dots,0)$, $x_2 = (1,b,\dots,b^{d_1},0,\dots,0)$ of $C$. Now, since all the $1$-dimensional faces of $P$ have length at least three \text{(i)} and \text{(ii)} yield that the Terracini matrix $T_{X_P}(x_1,x_2)$ has a minor $M_{a,b}$ of size $(2n+2)$ of the following form 
$$
M_{a,b} = \left(\begin{array}{cc}
A_{a,b} & 0_{4,2n-2} \\ 
0_{2n-2,4} & B_{a,b}
\end{array}\right) 
$$
where $0_{i,j}$ denotes the $i\times j$ zero matrix
$$
A_{a,b} = \left(\begin{array}{cccc}
1 & a & a^2 & a^3\\ 
0 & 1 & 2a & 3a^2\\
1 & b & b^2 & b^3\\ 
0 & 1 & 2b & 3b^2
\end{array}\right)
\quad 
\text{and}
\quad
B_{a,b} = {\rm Diag}_{2n-2}(C_{a,b})
$$
is the size $2n-2$ matrix having $n-1$ copies of 
$$
C_{a,b} = \left(\begin{array}{cc}
1 & a\\
1 & b
\end{array}\right)
$$
on the main diagonal and whose other entries are zero. Note that $\det(A_{a,b}) = (b-a)^4$ and $\det(B_{a,b}) = (b-a)^{n-1}$. Therefore, $\det(M_{a,b}) = \det(A_{a,b})\det(B_{a,b}) = (b-a)^{n+3}$ proving the claim. 
\end{proof}

\begin{Remark}
The closure of the image of the map
$$
\begin{array}{lccc}
\phi: & (k^*)^2 & \longrightarrow & \mathbb{P}^{9}\\
& (u_1,u_2) & \mapsto & [1:u_1:u_1^2:u_1^{3}:u_2:u_2^2:u_2^{3}:u_1u_2:u_1^2u_2:u_1u_2^2]
\end{array}
$$
is the degree three Veronese embedding $V_3^2\subset\mathbb{P}^9$. Proposition \ref{TC2} yields that $T_2(V_{3}^{2})$ is empty. Now, let $X_P\subset\mathbb{P}^8$ be the projection of $V_3^2$ from $[0:\dots :0:1:0:0]$, that is the closure of the image of the map $\phi_P$ obtained by removing $u_1u_2$ from the expression of $\phi$. The variety $X_P$ is smooth. 

For any pair of points of the form $x_1 = \phi_P(a,0)$, $x_2 = \phi_P(-a,0)$ we have $T_{x_1}X_P\cap T_{x_2}X_P \neq \emptyset$ and so $T_2(X_P)$ is non empty. Therefore, the assumption on the linear normality of $X_P$ in Proposition \ref{TC2} can not be dropped.
\end{Remark}

\begin{Example}
We show that $2$-Terracini loci can originate from curves of degree greater than two. The closure of the image of the map
$$
\begin{array}{lccc}
\phi: & k^2 & \longrightarrow & \mathbb{P}^{6}\\
 & (u_1,u_2) & \mapsto & [u_1^3 : u_1^2u_2^3 : u_1^2u_2^2 : u_1^2u_2 : u_1u_2^2 : u_1u_2 : u_2]
\end{array}
$$
defined by the following lattice polygon
\begin{center}
\begin{tikzpicture}[scale=.5]
        \tkzInit[xmax=2,ymax=2,xmin=-1,ymin=-1]
        \tkzGrid
 \tkzDefPoint(1,2){P1}
 \tkzDefPoint(-1,0){P2}
 \tkzDefPoint(2,-1){P3}
 \tkzDefPoint(1,1){Q1}
 \tkzDefPoint(1,0){Q2}
 \tkzDefPoint(0,1){Q3}
 \tkzDefPoint(0,0){Q4}
 \tkzDrawSegments[color=black](P1,P2 P2,P3 P3,P1)
 \tkzDrawPoints[size=2.5](P1,P2,P3,Q1,Q2,Q3,Q4)
\end{tikzpicture}
\end{center}
is a surface $X\subset\mathbb{P}^6$ of degree eight, and $C = \overline{\phi(\{u_1-u_2 = 0 \})}\subset X$ is a rational normal quartic. The Terracini matrix of $\phi$ at any two general points 
of the curve $\{u_1-u_2 = 0 \}$ is the following
\[
\begin{pmatrix}
1 & 1 & 1 & 1 & 1 & 1 & 1 \\
3 & 2 & 2 & 2 & 1 & 1 & 0 \\
0 & 3 & 2 & 1 & 2 & 1 & 1 \\
t^3 & t^5 & t^4 & t^3 & t^3 & t^2 & t \\
3t^2 & 2t^4 & 2t^3 & 2t^2 & t^2 & t & 0 \\
0 & 3t^4 & 2t^3 & t^2 & 2t^2 & t & 1
\end{pmatrix}
\]
where we assume one of the two points to be the image of $(1,1)$. This matrix has always rank five
since the vector 
$$(t^5 - t^4 - 2t^3,  -t^4 + t^3,   -t^5 + t^4,   2t^2 + t - 1,   -t^3 + t^2,   -t^2 + t)$$
is in the kernel of its transpose.
\end{Example}

In what follows we make use of \cite[Theorem 3.5]{BC21} and \cite[Lemma 3.6]{BC21} to prove emptiness of Terracini loci for a class of embedded projective toric variety. In order to this we introduce an unbounded 
convex set attached to a projective variety.

\begin{Definition}
Let $X$ be a smooth projective variety. The \textit{ample body} of $X$ is
\[
 \mathscr A_X
 :=
 \{
  D\in {\rm N}^1(X)_{\mathbb R}\, :\, 
  D\cdot C\geq 1\text{ for any curve $C$}
 \}.
\]
\end{Definition}
Observe that $\mathscr A_X$ is an unbounded convex set such that $\mathscr A_X + {\rm Nef}(X) = \mathscr A_X$. 

\begin{Proposition}\label{polyhedral}
Let $X$ be a smooth projective variety
whose monoid of classes of curves is 
finitely generated.
Then $\mathscr A_X$ is a polyhedron
which is the Minkowski sum 
$$
 \mathscr A_X = A_X + {\rm Nef}(X)
$$
of a rational polytope $A_X\subseteq {\rm N^1}(X)_{\mathbb R}$
together with its recession cone ${\rm Nef}(X)
\subseteq {\rm N^1}(X)_{\mathbb R}$.
\end{Proposition}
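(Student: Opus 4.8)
The plan is to reduce the infinitely many inequalities defining $\mathscr A_X$ to a finite system, thereby realizing $\mathscr A_X$ as a rational polyhedron, and then to read off the Minkowski decomposition from the decomposition theorem for polyhedra. Fix a finite set of generators $[C_1],\dots,[C_k]$ of the monoid of curve classes, each $C_i$ being an effective curve. The key step is the identity
\[
 \mathscr A_X=\{D\in{\rm N^1}(X)_{\mathbb R}:D\cdot C_i\geq 1\ \text{for }i=1,\dots,k\}.
\]
The inclusion $\subseteq$ holds by testing $D$ against the generators. For the reverse inclusion, given $D$ with $D\cdot C_i\geq 1$ for all $i$ and an arbitrary effective curve $C$, write its class as $[C]=\sum_i a_i[C_i]$ with $a_i\in{\mathbb Z}_{\geq 0}$ not all zero; then $D\cdot C=\sum_i a_i(D\cdot C_i)\geq\sum_i a_i\geq 1$, using $D\cdot C_i\geq 1$ and $a_i\geq 0$. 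I expect this to be the main obstacle, or at least the step that must be stated with care: it is precisely the integrality of the coefficients $a_i$ that lets the threshold $1$ propagate from the generators to every curve, so the argument genuinely uses that the $[C_i]$ generate the monoid of curve classes over $\mathbb Z_{\geq 0}$, and not merely the real Mori cone.

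Granting the identity, $\mathscr A_X$ is an intersection of $k$ rational affine half-spaces, hence a rational polyhedron; it is nonempty because, for an ample class $A$, scaling by a large integer makes $A\cdot C_i\geq 1$ for the finitely many $i$. Next I would identify the recession cone. For a polyhedron cut out by $D\cdot C_i\geq 1$ the recession cone is $\{D:D\cdot C_i\geq 0\ \text{for all }i\}$, and by the same monoid computation (now with threshold $0$, where integrality plays no role) this set equals $\{D:D\cdot C\geq 0\ \text{for every curve }C\}={\rm Nef}(X)$.

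Finally I would invoke the Minkowski--Weyl (Motzkin) decomposition theorem: every polyhedron is the Minkowski sum of a polytope and its recession cone, and when the defining data are rational the polytope may be chosen rational. Applying this to $\mathscr A_X$ with recession cone ${\rm Nef}(X)$ yields $\mathscr A_X=A_X+{\rm Nef}(X)$ for a rational polytope $A_X$. Since the classes of effective curves span ${\rm N}_1(X)_{\mathbb R}$, the cone ${\rm Nef}(X)$ is pointed, so $\mathscr A_X$ is a pointed polyhedron and one may take $A_X$ to be the convex hull of its finitely many rational vertices.
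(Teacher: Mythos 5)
Your proof is correct and follows essentially the same route as the paper's: reduce to the finitely many half-spaces $\{D\cdot C_i\geq 1\}$ coming from generators of the monoid of curve classes, then apply the Minkowski--Weyl decomposition and identify the recession cone with $\operatorname{Nef}(X)$. You actually spell out the one point the paper leaves implicit, namely that the integrality (and non-triviality) of the coefficients $a_i$ is what lets the threshold $1$ propagate from the generators to an arbitrary curve.
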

\begin{proof}
By hypothesis there exist finitely many
irreducible curves $C_1,\dots,C_r$ whose 
classes in ${\rm N}_1(X)_{\mathbb R}$
form a Hilbert basis of the Mori cone of $X$. Any curve of $X$ is rationally equivalent to 
a non-negative sum of these curves.

It follows that $\mathscr A_X$ is intersection 
of the finitely many half-spaces 
$\{D\in {\rm N}^1(X)_{\mathbb R}\, :\, D\cdot C_i\geq 1\}$,
which proves the statement. Being $\mathscr A_X$ a polyhedron it is
Minkowski sum of a polytope $A_X$ 
together with its recession cone $\sigma_X$.
From the definition of $\mathscr A_X$ it 
immediately follows that ${\rm Nef}(X) = \sigma_X$.
\end{proof}

\begin{Proposition}\label{P_Body}
Let $X$ be a smooth projective variety
whose Mori cone is rational polyhedral.
Then $A_X$ is a normal lattice polytope if and only if for any $h$ the equality
$$(h\cdot\mathscr A_X) \cap {\rm N}^1(X) = \sum_{i=1}^h(\mathscr A_X\cap {\rm N}^1(X))$$ 
holds.
\end{Proposition}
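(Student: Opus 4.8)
The plan is to pass to the cone over the ample body and read both conditions off a single graded monoid. Write $V = {\rm N}^1(X)_{\mathbb R}$, $\Lambda = {\rm N}^1(X)$, and let $C_1,\dots,C_r$ be the Hilbert basis of the Mori cone used in Proposition \ref{polyhedral}, so that $\mathscr A_X = \{D : D\cdot C_i\ge 1\}$ and $\Nef(X) = \{D : D\cdot C_i\ge 0\}$. In $V\times\mathbb R$ consider the rational polyhedral cone
\[
 C := \{(x,t) : t\ge 0,\ x\cdot C_i\ge t\ \text{for all }i\},
\]
whose slice at height $t=h$ is $h\mathscr A_X$ for $h\ge 1$. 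Since $\Nef(X)$ is full dimensional, $F := \Nef(X)\times\{0\} = C\cap\{t=0\}$ is a facet of $C$ and the height $t$ is exactly its supporting functional. I would first record the trivial inclusion $\sum_{i=1}^h(\mathscr A_X\cap\Lambda)\subseteq(h\mathscr A_X)\cap\Lambda$, valid for every $h$ since a sum of $h$ lattice points of $\mathscr A_X$ lies in $h\mathscr A_X$ and in $\Lambda$. Thus the asserted equality, for all $h$, is equivalent to the reverse inclusion, i.e. to the statement that the graded monoid $S_C := C\cap(\Lambda\times\mathbb Z)$ is generated by its elements of height $0$ and $1$: the height-$0$ part is $\Nef(X)\cap\Lambda$, the height-$1$ part is $\mathscr A_X\cap\Lambda$, and because $\mathscr A_X + \Nef(X) = \mathscr A_X$ a height-$1$ element absorbs any height-$0$ element, so "generated in heights $0,1$" means precisely that every height-$h$ element is a sum of $h$ height-$1$ elements.

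For the implication equality $\Rightarrow$ ($A_X$ a normal lattice polytope) I would first prove that every vertex $v$ of $\mathscr A_X$ (equivalently of $A_X$) is a lattice point: letting $q$ be a denominator of $v$, the lattice point $(qv,q)\in S_C$ is by hypothesis a sum of $q$ height-$1$ lattice points $(z_j,1)$, whence $v = \tfrac1q\sum_j z_j$ exhibits the vertex $v$ as a convex combination of points of $\mathscr A_X$; as $v$ is extreme this forces $z_j = v\in\Lambda$, so $A_X$ is a lattice polytope. For normality I would take $w\in(hA_X)\cap\Lambda$, view $(w,h)\in S_C$, decompose it into height-$1$ lattice points of $S_C$, and then use that $w$ lies in the \emph{bounded} slice $hA_X$ (not merely in $h\mathscr A_X$) to reabsorb the nef contributions and land in $\sum_{i=1}^h(A_X\cap\Lambda)$. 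Conversely, assuming $A_X$ is a normal lattice polytope, I would take $z\in(h\mathscr A_X)\cap\Lambda$, produce a lattice class $n\in\Nef(X)\cap\Lambda$ with $z-n\in hA_X$, apply normality of $A_X$ to write $z-n=\sum_i a_i$ with $a_i\in A_X\cap\Lambda$, and absorb $n$ into one summand so that $z=(a_1+n)+a_2+\dots+a_h\in\sum_{i=1}^h(\mathscr A_X\cap\Lambda)$.

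The hard part will be the lattice bookkeeping shared by these last two steps: converting the real Minkowski decomposition $\mathscr A_X = A_X + \Nef(X)$ into an integral one, i.e. splitting a lattice point of $h\mathscr A_X$ as a lattice point of $hA_X$ plus a lattice nef class. A naive real splitting $z = w + n$ need not have $w,n\in\Lambda$, and a generic compact slice $\Nef(X)\cap(z-hA_X)$ carries no lattice point, so this cannot be done for an arbitrary polytope-plus-cone. The structural input I expect to exploit is that $\mathscr A_X$ is \emph{not} arbitrary: its bounded facets share the normals $C_i$ of the facets of $\Nef(X)$, equivalently $F=\Nef(X)\times\{0\}$ is a facet of $C$ with the grading by height as its facet functional. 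Using this alignment together with the normality of $A_X$ to produce the required integral nef class, and dually to reabsorb nef contributions in the normality step, is the crux; once it is in place, both implications follow formally from the monoid reformulation above.
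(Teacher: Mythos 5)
Your reduction to the graded monoid $S_C=C\cap(\Lambda\times\mathbb Z)$ and your argument that the stated equality forces every vertex of $\mathscr A_X$ (hence of $A_X$) to be a lattice point are correct and, in fact, more careful than what the paper records: the paper's entire proof is the one-line observation that normality of $A_X$ gives $(h\cdot A_X)\cap{\rm N}^1(X)=\sum_{i=1}^h(A_X\cap{\rm N}^1(X))$, ``which is exactly what we need''. But your proposal is not yet a proof, because the step you yourself label ``the crux'' is where all the content lives, and it is left unexecuted in both directions. Concretely: (i) for normality $\Rightarrow$ equality you must show that every $z\in(h\cdot\mathscr A_X)\cap\Lambda$ splits \emph{integrally} as $z=w+n$ with $w\in(h\cdot A_X)\cap\Lambda$ and $n\in{\rm Nef}(X)\cap\Lambda$; the real splitting coming from $h\cdot\mathscr A_X=h\cdot A_X+{\rm Nef}(X)$ only shows that the polytope $(z-{\rm Nef}(X))\cap (h\cdot A_X)$ is nonempty, and nothing you have written guarantees that this polytope meets $\Lambda$. (ii) For equality $\Rightarrow$ normality you must show that a decomposition $w=\sum_j z_j$ with $w\in(h\cdot A_X)\cap\Lambda$ and $z_j\in\mathscr A_X\cap\Lambda$ can be rectified so that each $z_j$ lies in $A_X$; writing $z_j=a_j+n_j$ with $a_j\in A_X$ and $n_j\in{\rm Nef}(X)$, the hypothesis $w\in h\cdot A_X$ does not by itself force $\sum_j n_j=0$, and even if it did the $a_j$ need not be integral. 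Your appeal to the alignment of the bounded facets of $\mathscr A_X$ with the facet normals $C_i$ of ${\rm Nef}(X)$ is a plausible place to look for these lemmas, but as written it is a statement of intent rather than an argument.

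To be fair, the paper's own proof omits exactly the same step (and does not address the ``only if'' direction at all), so you have correctly located the genuine mathematical content of the proposition; your framework, if completed, would be a substantial improvement on the published one-liner. Judged as submitted, however, the proposal establishes the trivial inclusion, the monoid reformulation, and the fact that the equality forces $A_X$ to be a \emph{lattice} polytope, while both implications of the stated equivalence still rest on the unproved integral Minkowski splitting. Until that lemma is supplied, neither direction of the proposition is proved.
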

\begin{proof}
By the definition of normal lattice polytope
we have that 
$$(h\cdot A_X) \cap {\rm N}^1(X) = \sum_{i=1}^h(A_X\cap {\rm N}^1(X))$$
and this is exactly what we need for the second condition to be satisfied.
\end{proof}

\begin{Remark}
Observe that if $A_X$ is a point then 
is must necessarily be a lattice point
and of course a polytope consisting 
of just one lattice point is normal.
\end{Remark}

\begin{Lemma}
\label{Lindep}
Let $X$ be a projective variety with smooth locus
$X^{o}$ and let $A,B$ be Weil divisors on $X$. 
Assume that for any subsets $S_A,S_B\subseteq
X^{o}$ of cardinality $n := |S_A|>1$ and 
$|S_B|=2$ one has
$$
H^1(X,\mathcal O_X(A)\otimes\mathcal I_{S_A}) = H^1(X,\mathcal O_X(B)\otimes\mathcal I_{S_B}) = 0.
$$
Then $H^1(X,\mathcal O_X(A+B)\otimes\mathcal I_S) = 0$ for any subset $S\subseteq X^{o}$ of cardinality $n+1$.
\end{Lemma}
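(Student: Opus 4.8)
The plan is to deduce the statement from the standard exact sequence
\[
0 \to \mathcal{I}_S\otimes\mathcal{O}_X(A+B) \to \mathcal{O}_X(A+B) \to \bigoplus_{p\in S}k(p) \to 0,
\]
which is valid since $S$ consists of reduced points in the smooth locus. Its cohomology shows that $H^1(X,\mathcal{I}_S(A+B))=0$ is equivalent to the conjunction of two facts: (i) the evaluation map $H^0(\mathcal{O}_X(A+B))\to\bigoplus_{p\in S}k(p)$ is surjective, i.e. the $n+1$ points impose independent conditions on $|A+B|$; and (ii) $H^1(X,\mathcal{O}_X(A+B))=0$, since once (i) holds the connecting map vanishes and $H^1(\mathcal{I}_S(A+B))\cong H^1(\mathcal{O}_X(A+B))$. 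First I would record what the hypotheses already supply: running the same sequence for $A$ and for $B$ gives $H^1(\mathcal{O}_X(A))=H^1(\mathcal{O}_X(B))=0$, that $A$ separates any $n$ points of $X^{o}$, and that $B$ separates points and has no base point on $X^{o}$ (a base point would make the two conditions imposed by a pair dependent).

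The heart is (i), which I would prove through the multiplication map $H^0(A)\otimes H^0(B)\to H^0(A+B)$ together with the arithmetic identity $n+1=n+2-1$. Fix a point $p_i\in S$; the goal is a section $\sigma_i$ of $A+B$ nonzero at $p_i$ and vanishing at the remaining $n$ points. Splitting $S$ into an $n$-subset and a $2$-subset overlapping in the single point $p_i$, I use the $A$-hypothesis to produce $a_i\in H^0(A)$ nonzero at $p_i$ and vanishing at the other $n-1$ points of the chosen $n$-subset, and the $B$-hypothesis to produce $b_i\in H^0(B)$ nonzero at $p_i$ and vanishing at the one leftover point; then $\sigma_i=a_ib_i$ works, the vanishing at each $p_j\neq p_i$ being supplied by whichever of $a_i,b_i$ was designed to kill it. Letting $p_i$ range over $S$ produces sections whose images form a basis of $\bigoplus_{p\in S}k(p)$, so the evaluation map is surjective.

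For (ii) I would use residuation along a member of $|B|$. Choose $D\in|B|$ through one point $p\in S$ and missing $S':=S\setminus\{p\}$; such a $D$ exists because $B$ separates points, so a general section vanishing at $p$ avoids the finitely many points of $S'$. Since $p\in D$ while $S'\cap D=\emptyset$, one obtains the exact sequence
\[
0 \to \mathcal{I}_{S'}(A) \to \mathcal{I}_S(A+B) \to \mathcal{I}_{p/D}\big((A+B)|_D\big) \to 0,
\]
whose left-hand term has vanishing $H^1$ by the $A$-hypothesis, as $|S'|=n$. Hence $H^1(\mathcal{I}_S(A+B))$ injects into $H^1\big(D,\mathcal{I}_{p/D}((A+B)|_D)\big)$, and the whole problem collapses onto a single point of the divisor $D$. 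I expect this divisor term, equivalently the vanishing $H^1(X,\mathcal{O}_X(A+B))=0$, to be the main obstacle: controlling cohomology on $D$ is not handed to us by the hypotheses, and I would attack it by induction on $\dim X$ with curves as the base case—there $D$ is a finite scheme and the term vanishes automatically—verifying along the way that the separation hypotheses restrict to a general member of the base-point-free system $|B|$. This passage to the divisor, rather than the clean combinatorial step (i), is where the real work lies.
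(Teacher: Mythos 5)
Your step (i) is exactly the paper's proof: the paper writes $S=U\cup\{p,q\}$ with $|U|=n-1$, uses the hypotheses to find $f\in H^0(X,\mathcal O_X(A)\otimes\mathcal I_U)$ and $g\in H^0(X,\mathcal O_X(B)\otimes\mathcal I_p)$ not vanishing at $q$, and concludes with the product $fg$ --- the same section--product argument you give, and nothing more. The paper does not address your point (ii): it implicitly reads all the $H^1$-vanishings in the statement as ``imposes independent conditions'' assertions, which is also all that is used downstream --- Corollary \ref{Cindep} is phrased purely in terms of independent conditions, and its hypothesis that $B$ be very ample would not even guarantee $h^1(X,\mathcal O_X(B)\otimes\mathcal I_{S_B})=0$ (take $B=K_X$ on a canonically embedded non-hyperelliptic curve). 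You are right that the literal cohomological statement additionally requires $H^1(X,\mathcal O_X(A+B))=0$; your residuation sketch correctly isolates this as the hard part, and as you suspect it does not follow from the stated hypotheses via the Horace sequence alone, since the trace term on $D$ is uncontrolled --- but this extra vanishing is neither proved nor needed in the paper. In short, your combinatorial step coincides with the published proof, and your unfinished step (ii) is a legitimate observation about an imprecision in the lemma's cohomological phrasing rather than a defect in your reconstruction of its intended content.
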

\begin{proof}
Let $S\subseteq X^{o}$ be a subset of cardinality $n+1$.
Given any $p\in S$ write $S = U\cup\{p,q\}$ with $|U| = n-1$ non empty and
$p\neq q$. By hypothesis there exist 
$$
f\in H^0(X,\mathcal O_X(A)\otimes\mathcal I_U) \text{ and } g\in H^0(X,\mathcal O_X(B)\otimes\mathcal I_p)
$$
which do not vanish at $q$. Thus $fg\in H^0(X,\mathcal O_X(A+B)\otimes\mathcal I_{U\cup\{p\}})$ does not vanish at $q$.
\end{proof}

\begin{Corollary}\label{Cindep}
Let $X$ be a projective variety with smooth locus
$X^{o}$ and let $D$ be a Weil divisor which is sum 
of $n$ very ample divisors. 
Then any subset of $X^{o}$
of cardinality $n+1$ imposes independent conditions on $D$.
\end{Corollary}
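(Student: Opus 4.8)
The plan is to argue by induction on the number $n$ of very ample summands, using Lemma \ref{Lindep} as the inductive mechanism. Throughout, a finite subset $S\subseteq X^o$ \emph{imposes independent conditions} on a divisor $D$ exactly when the restriction map $H^0(X,\mathcal O_X(D))\to\bigoplus_{x\in S}\mathcal O_X(D)\otimes k(x)$ is surjective, equivalently when for every chosen $q\in S$ one can find a section of $\mathcal O_X(D)$ vanishing along $S\setminus\{q\}$ but not at $q$. This is precisely the vanishing $H^1(X,\mathcal O_X(D)\otimes\mathcal I_S)=0$ appearing in the hypothesis and conclusion of Lemma \ref{Lindep}, and it is the form in which I will verify everything.

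For the base case $n=1$ write $D=B_1$ with $B_1$ very ample. Then the complete linear system $|B_1|$ embeds $X$, so it separates points: for any two distinct $x_1,x_2\in X^o$ there is a section of $\mathcal O_X(B_1)$ vanishing at $x_1$ and not at $x_2$, and one vanishing at $x_2$ and not at $x_1$. Hence the restriction to the reduced length-two scheme $\{x_1,x_2\}$ is surjective, i.e. any $2$-element subset of $X^o$ imposes independent conditions on $B_1$. This settles $n=1$ and simultaneously supplies, at every stage of the induction, the input for the divisor $B$ demanded by Lemma \ref{Lindep}.

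For the inductive step let $n\ge 2$ and split $D=A+B_n$ with $A:=B_1+\dots+B_{n-1}$ a sum of $n-1$ very ample divisors. Applying the induction hypothesis to $A$ gives $H^1(X,\mathcal O_X(A)\otimes\mathcal I_{S_A})=0$ for every subset $S_A\subseteq X^o$ of cardinality $n=|S_A|>1$, while the base-case fact applied to the very ample $B_n$ gives $H^1(X,\mathcal O_X(B_n)\otimes\mathcal I_{S_B})=0$ for every $2$-element $S_B\subseteq X^o$. These are exactly the hypotheses of Lemma \ref{Lindep} with $A$, $B:=B_n$ and the integer $n$, so the lemma yields $H^1(X,\mathcal O_X(D)\otimes\mathcal I_S)=0$ for every $(n+1)$-element subset $S\subseteq X^o$. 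This is the desired statement for $D$ and closes the induction.

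The only genuine content lies in the base case---very ampleness implies separation of points, hence two distinct points impose independent conditions---together with the cardinality bookkeeping needed to feed Lemma \ref{Lindep}: the lemma consumes an $n$-point condition on $A$ and a $2$-point condition on $B_n$ and returns an $(n+1)$-point condition on $A+B_n$, which matches exactly the passage from $n-1$ to $n$ summands. The point to handle with care is that \emph{imposes independent conditions} should be read as surjectivity of the restriction map (equivalently, the existence of the separating sections produced in the proof of Lemma \ref{Lindep}), so that the base case genuinely provides, and the lemma genuinely propagates, the property in the exact form required.
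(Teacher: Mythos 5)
Your proof is correct and follows exactly the paper's intended argument: induction on $n$, with the base case coming from the fact that a very ample divisor separates points and the inductive step supplied by Lemma \ref{Lindep} applied to $A=B_1+\dots+B_{n-1}$ and $B=B_n$. The paper's own proof is just a two-line version of this, so you have simply filled in the details it leaves implicit.
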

\begin{proof}
Observe that if $B$ is very ample then it satisfies
the hypothesis on the divisor $B$ in Lemma~\ref{Lindep}.
The statement follows by induction on $n$.
\end{proof}

\begin{Proposition}\label{sevpts}
Let $X\subseteq\mathbb P^N$ be a projective 
variety embedded by a complete
linear system $|L|$. Assume that $L$ is sum of 
$2h-1\geq 3$ very ample divisors.
Then $T_h(X)$ is empty.
\end{Proposition}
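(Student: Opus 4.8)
The plan is to translate the emptiness of $T_h(X)$ into an independent-conditions statement about a double point scheme, and then to produce the required sections by multiplying sections of the very ample summands of $L$, exactly in the style of Lemma~\ref{Lindep} and Corollary~\ref{Cindep}. First I would record the reformulation coming from Terracini's lemma: writing $L=\mathcal O_X(1)$ and $Z:=2x_1+\dots+2x_h$ for distinct smooth points $x_1,\dots,x_h\in X^{o}$, a hyperplane contains $T_{x_i}X$ precisely when the corresponding section of $L$ is singular at $x_i$, so that
\[
\dim\left\langle T_{x_1}X,\dots,T_{x_h}X\right\rangle = N - h^0(X,\mathcal O_X(L)\otimes\mathcal I_Z).
\]
Hence no $h$-tuple lies in $T_h(X)$ exactly when $Z$ always imposes independent conditions on $|L|$, i.e.\ the restriction $H^0(X,\mathcal O_X(L))\to H^0(\mathcal O_Z(L))=\bigoplus_{i=1}^h\mathcal O_{2x_i}(L)$ is surjective for every configuration. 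Note that this surjectivity will in particular force $h(n+1)\le h^0(\mathcal O_X(L))$, so the expected value $\min\{hn+h-1,N\}$ will automatically equal $hn+h-1$; thus it is enough to prove surjectivity for arbitrary distinct $x_1,\dots,x_h$.

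Next I would localise at one point. Put $W_i:=\sum_{j\neq i}2x_j$; a section $s_i\in H^0(\mathcal O_X(L)\otimes\mathcal I_{W_i})$ vanishes to order two at each $x_j$ with $j\neq i$, hence maps to $0$ in every summand $\mathcal O_{2x_j}(L)$, $j\neq i$. Consequently, if for each $i$ the restriction $H^0(\mathcal O_X(L)\otimes\mathcal I_{W_i})\to\mathcal O_{2x_i}(L)$ is surjective, then choosing preimages $s_i$ of the components $\zeta_i$ of a target and summing shows that $H^0(\mathcal O_X(L))\to\bigoplus_i\mathcal O_{2x_i}(L)$ is surjective. So the whole statement reduces to realising an arbitrary $1$-jet at $x_i$ by a section of $L$ vanishing to order two at the remaining $h-1$ points.

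For this last, local step I would finally use the hypothesis that $L$ is a sum of $2h-1\ge 3$ very ample divisors, splitting it as $L=B'+B''+A$ where $B'$ and $B''$ are each a sum of $h-1$ very ample divisors and $A$ is very ample. By Corollary~\ref{Cindep} the $h$ points $\{x_j:j\neq i\}\cup\{x_i\}$ impose independent conditions on $B'$ and on $B''$, so there exist $f'\in H^0(\mathcal O_X(B'))$ and $f''\in H^0(\mathcal O_X(B''))$ vanishing at every $x_j$ with $j\neq i$ but not at $x_i$. Then $F:=f'f''$ vanishes to order at least two at each such $x_j$ and is nonzero at $x_i$, and for every $g\in H^0(\mathcal O_X(A))$ the product $Fg$ lies in $H^0(\mathcal O_X(L)\otimes\mathcal I_{W_i})$. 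The image of $Fg$ in $\mathcal O_{2x_i}(L)$ is the image of $g$ multiplied by $F|_{2x_i}$, which is a unit of the local ring $\mathcal O_{2x_i}$ because $F(x_i)\neq0$; multiplication by this unit is an automorphism of $\mathcal O_{2x_i}$. Since $A$ is very ample, $H^0(\mathcal O_X(A))\to\mathcal O_{2x_i}(A)$ is already surjective, so $g\mapsto Fg|_{2x_i}$ is surjective onto $\mathcal O_{2x_i}(L)$, which is precisely the localised claim.

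Combining the three steps yields surjectivity of $H^0(\mathcal O_X(L))\to H^0(\mathcal O_Z(L))$ for every choice of points, so $T_h(X)^{o}=\emptyset$ and hence $T_h(X)=\emptyset$. I expect the genuine difficulty to sit in the third step, and specifically in passing from the value coordinate to the \emph{tangent} coordinates of the $1$-jet at $x_i$: one must secure order-two vanishing at the $h-1$ remaining points while retaining full control of the jet at $x_i$, and it is the bookkeeping $2(h-1)+1=2h-1$ that makes the two order-one factors $f',f''$ together with the jet-separating factor $A$ match the hypothesis exactly. The clean device for handling the derivative directions is the observation that multiplication by $F$, a unit modulo the square of the maximal ideal at $x_i$, is an automorphism of $\mathcal O_{2x_i}$, so the entire $1$-jet, and not merely its value, is attained.
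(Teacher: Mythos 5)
Your proof is correct, and it is worth recording how it relates to the one in the paper. Both arguments start from the identical decomposition of $L$ into $(h-1)+(h-1)+1$ very ample pieces and both use Corollary~\ref{Cindep} to make the $h$ reduced points impose independent conditions on the $(h-1)$-fold sums; the difference lies entirely in the assembly. The paper feeds the decomposition into two external results: \cite[Lemma 3.6]{BC21}, which upgrades the statement for the reduced set $S$ to the statement that $(S\setminus\{p\})\cup\{2p\}$ imposes independent conditions on $B+C$, and \cite[Theorem 3.5]{BC21}, which combines $L_1=B+C$ and $L_2=A$ into the full double-point conclusion for $L$. You instead reprove exactly this assembly by hand: the reduction of the simultaneous condition on $Z=2x_1+\dots+2x_h$ to one point at a time via sections vanishing on $W_i$, and the observation that $F=f'f''$ vanishes to order two at the other points while being a unit in $\mathcal{O}_{2x_i}$, so that multiplication by $F$ carries the surjection $H^0(\mathcal{O}_X(A))\to\mathcal{O}_{2x_i}(A)$ (which is just very ampleness at a smooth point) isomorphically onto the required jet space for $L$. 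The net effect is a self-contained proof that specialises and re-derives the two cited results of \cite{BC21}, in the same multiplicative spirit as Lemma~\ref{Lindep}; it is longer, but it makes transparent why the count $2(h-1)+1=2h-1$ is exactly what the argument consumes. One small point of care, which you handle correctly but which is worth keeping explicit: the jet computations take place at points of the smooth locus $X^{o}$, consistent with Definition~\ref{TC_Def} and with the paper's own choice of $S\subseteq X^{o}$.
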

\begin{proof}
Let $S\subseteq X^{o}$ be a subset of $h$ distinct
points. Write $L = A+B+C$ where both $A$ and $B$
are sum of $h-1$ very ample divisors and $C$ is very ample.
Since $C$ is very ample for any $p\in S$,
there is a smooth element in the linear system of $C$ through $p$
which does not contain any point of $S\setminus\{p\}$.

By Corollary~\ref{Cindep} $S$ imposes independent conditions on $A$. Since also $B+C$ is very ample the hypotheses 
of~\cite[Lemma 3.6]{BC21}
are satisfied so that for any $p\in S$ the scheme $(S\setminus\{p\})
\cup\{2p\}$ imposes independent conditions on
$B+C$. Finally, to conclude it is enough to apply \cite[Theorem 3.5]{BC21} with $L_1=B+C$ and $L_2=A$.
\end{proof}

Now, we are ready to prove the main result of this section.

\begin{thm}\label{main_to}
Let $P\subseteq M_{\mathbb Q}$ be a full 
dimensional lattice polytope such that 
the corresponding projective toric variety $X_P\subseteq\mathbb P^{|P\cap M|-1}$, embedded by a complete linear system $|L|$, is smooth. If $A_{X_P}$ is
a normal lattice polytope then the following 
are equivalent:
\begin{itemize}
\item[(\textit{a})] $\ell(P) = s$ with $s\geq 2h-1$;
\item[(\textit{b})] $L\in s\cdot\mathscr A_X$;
\item[(\textit{c})] $T_h(X)$ is empty.
\end{itemize}
\end{thm}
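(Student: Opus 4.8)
The plan is to prove the three conditions equivalent by establishing $(a)\Leftrightarrow(b)$ as a direct translation into toric intersection theory, then $(a)\Rightarrow(c)$ using the normality hypothesis together with Proposition~\ref{sevpts}, and finally $(c)\Rightarrow(a)$ by its contrapositive, exhibiting a Terracini point on a short invariant curve. One may assume $h\ge 2$: for $h=1$ the locus $T_1(X)$ is empty and every edge has length $\ge 1=2h-1$, so all three conditions hold trivially. For $(a)\Leftrightarrow(b)$ I would use that the Mori cone of the smooth toric variety $X_P$ is generated by the classes of the torus-invariant curves $C_E$ attached to the edges (1-dimensional faces) $E$ of $P$, and that $L\cdot C_E=|E\cap M|-1$ is the lattice length of $E$. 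Hence the minimum of $L\cdot C$ over all curves $C$ is attained on some $C_E$ and equals $\ell(P)$. Since $L\in s\cdot\mathscr A_X$ means exactly $L\cdot C\ge s$ for every curve $C$, this gives $L\in s\cdot\mathscr A_X\Leftrightarrow\ell(P)\ge s$, which is $(a)\Leftrightarrow(b)$. I would also record that $\mathscr A_X\cap{\rm N}^1(X)$ is precisely the set of very ample classes: an integral divisor $D$ with $D\cdot C_E\ge 1$ for every invariant curve is ample by the toric Kleiman criterion, and on a smooth toric variety ample divisors are very ample.

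For $(a)\Rightarrow(c)$ the normality hypothesis enters. From $\ell(P)=s\ge 2h-1$ the previous paragraph gives $L\cdot C\ge 2h-1$ for every curve, hence $L\in (2h-1)\cdot\mathscr A_X\cap{\rm N}^1(X)$. Because $A_{X_P}$ is a normal lattice polytope, Proposition~\ref{P_Body} applied with $2h-1$ in place of $h$ yields a decomposition $L=D_1+\dots+D_{2h-1}$ with each $D_i\in\mathscr A_X\cap{\rm N}^1(X)$, so each $D_i$ is very ample by the identification above. As $2h-1\ge 3$, Proposition~\ref{sevpts} then gives that $T_h(X)$ is empty.

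For $(c)\Rightarrow(a)$ I prove the contrapositive: if $\ell(P)=d\le 2h-2$, choose an edge realizing the minimum, so its invariant curve $C_E\subset X_P$ is a smooth rational curve embedded as a rational normal curve of degree $d$ in its span $\mathbb P^d$. For any $h$ distinct points $x_1,\dots,x_h\in C_E$ I claim the scheme $Z=2x_1+\dots+2x_h$ fails to impose independent conditions on $|L|$. The key point is that a section $s\in H^0(X,L\otimes\mathcal I_{C_E})$ already vanishes along $C_E$, so its being singular at a point $x_i\in C_E$ is a condition only on the $n-1$ normal directions, i.e. on $(N^\vee_{C_E/X}\otimes L)_{x_i}$. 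Hence the sections of $L\otimes\mathcal I_{C_E}$ singular at all the $x_i$ form a subspace of $H^0(L\otimes\mathcal I_Z)$ of dimension at least
$$
h^0(L\otimes\mathcal I_{C_E})-h(n-1)\ \ge\ h^0(L)-(d+1)-h(n-1),
$$
which exceeds the expected value $h^0(L)-h(n+1)$ by $2h-d-1\ge 1$. Therefore $\dim\langle T_{x_1}X,\dots,T_{x_h}X\rangle$ is strictly smaller than expected and $\{x_1,\dots,x_h\}\in T_h(X)$, so $T_h(X)\ne\emptyset$.

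The main obstacle is this last step. The equivalence is driven by the numerical coincidence $2h-d-1>0\Leftrightarrow d\le 2h-2$, which rests on the claim that, modulo vanishing along $C_E$, singularity at a point of the curve costs only $n-1$ conditions. Making this rigorous requires the conormal description of singular sections and a verification that the boundary regime $hn+h-1\ge N$ (where the expected span fills $\mathbb P^N$) is also covered; both should follow from very ampleness of $L$ together with a general choice of the $x_i$. By comparison $(a)\Leftrightarrow(b)$ is a formal translation and $(a)\Rightarrow(c)$ is immediate once the normality decomposition is in hand.
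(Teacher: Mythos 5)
Your proposal is correct and follows essentially the same route as the paper: $(a)\Leftrightarrow(b)$ via the torus-invariant curves generating the Mori cone with $L\cdot C_E$ equal to the lattice length of the edge, $(a)\Rightarrow(c)$ via Propositions \ref{P_Body} and \ref{sevpts}, and $(c)\Rightarrow(a)$ by exhibiting Terracini points on an invariant rational normal curve of degree $\le 2h-2$. Your conditions-count for the last step is just the dual formulation of the paper's span-of-tangent-lines argument (both reduce to $s+h(n-1)<hn+h-1$ iff $s<2h-1$), and the boundary caveat you flag when $hn+h-1\ge N$ is equally present, and equally unaddressed, in the paper's own proof.
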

\begin{proof}
Note that $(b)\Rightarrow (c)$ follows from Propositions \ref{P_Body} and \ref{sevpts}. Now, assume that $s< 2h-1$. Then $X_P$ contains an invariant curve embedded as a curve $C\subset \mathbb P^{|P\cap M|-1}$ of degree $s$. Since $X_P$ is smooth, arguing as in the proof of Proposition \ref{TC2}, we get that $C\subset \mathbb P^{|P\cap M|-1}$ is a rational normal curve of degree $s$. Since $s< 2h-1$ the span of any $h$ tangent lines of $C$ has dimension smaller than expected and hence the span of $h$ tangent spaces of $X_P$ at points of $C$ has dimension smaller than expected as well. Therefore, $T_h(X)$ is non empty proving that $(c)\Rightarrow (a)$.

Finally, to prove that $(a)\Rightarrow (c)$ note that since $\ell(P) = s$ all the toric invariant curves in $X_P$ have degree at least $s$, and since these curves generate the Mori cone of $X_P$ we get that $L\in s\cdot\mathscr A_X$.
\end{proof}

There exist smooth toric varieties for which $A_X$ is not a normal lattice polytope as the following example shows.

\begin{Example}
Let $X$ be the smooth toric surface whose fan is the following:
 \begin{center}
  \begin{tikzpicture}[scale=.7]
  \draw[help lines] (-3.5,-1.5) grid (2.5,1.5);
  \foreach \x/\y in {0/-1, 2/-1, 1/0, 1/-1, -1/1, 0/1, -2/1, -3/1, -1/0}
  {
   \draw (0,0) -- (\x,\y);
   \node at (\x,\y) {\tiny $\bullet$};
  }
  \node[above] at (-3,1) {\tiny $D_1$};
  \node[above] at (-2,1) {\tiny $D_2$};
  \node[left] at (-1,0) {\tiny $D_3$};
  \node[above] at (-1,1) {\tiny $D_4$};
  \node[below] at (0,-1) {\tiny $D_5$};
  \node[above] at (0,1) {\tiny $D_6$};
  \node[below] at (1,-1) {\tiny $D_7$};
  \node[right] at (1,0) {\tiny $D_8$};
  \node[below] at (2,-1) {\tiny $D_9$};
  \node at (0,0) {\tiny $\bullet$};
  \end{tikzpicture}
 \end{center}
Then $A_X$ is not a lattice polytope. To prove this observe that the $\mathbb Q$-divisor
$$D = D_3+D_4+4D_5+3D_6+4D_7+\frac{7}{2}D_8+5D_9$$
has intersection product at least one with any $D_i$ and the equality holds for $i\in\{1,2,3,4,5,7,8\}$.
 Since the classes of these seven divisors form a 
 basis of the rational Picard lattice, it follows that the class of
 $D$ is a vertex of $A_X$. On the other hand
 $D\cdot D_6 = \frac{3}{2}$, so that the class of $D$ 
 can not be in the integral Picard group and so $A_X$ con not be a lattice polytope.
\end{Example}

\section{Applications}\label{appli}
In this section we apply our main results in Sections \ref{TSIB} and \ref{TVAB} to several classes of projective varieties.

\begin{Proposition}\label{Pic2}
Let $X\subset\mathbb{P}^N$ be a smooth toric variety of Picard rank two embedded by the complete linear system of a divisor $L\in (2h-1)\cdot\mathscr{A}_X$. Then $T_h(X)$ is empty.
\end{Proposition}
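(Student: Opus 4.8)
The plan is to reduce Proposition \ref{Pic2} to Theorem \ref{main_to}. The only hypothesis of that theorem that is not part of the present statement is the normality of $A_X$, so the whole task is to show that for a smooth projective toric variety $X$ of Picard rank two the polytope $A_X$ is automatically a single lattice point, hence a normal lattice polytope. Once this is granted, the assumption $L\in(2h-1)\cdot\mathscr A_X$ is exactly condition $(b)$ of Theorem \ref{main_to} with $s=2h-1\geq 2h-1$, and the implication $(b)\Rightarrow(c)$ gives that $T_h(X)$ is empty. (For $h=1$ the statement is trivial, since $X$ is smooth and non-degenerate, so I may assume $h\geq 2$.)

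First I would use the structure of rank-two varieties together with Kleinschmidt's classification. Since $\rho(X)=2$ we have $N^1(X)\cong\mathbb Z^2$ and the Mori cone is a two-dimensional pointed cone with exactly two extremal rays, generated by irreducible invariant curves $C_1,C_2$. By Kleinschmidt's theorem $X$ is a projective bundle $\mathbb P(\mathcal O_{\mathbb P^r}\oplus\mathcal O_{\mathbb P^r}(a_1)\oplus\cdots\oplus\mathcal O_{\mathbb P^r}(a_s))$, and I would take $C_1=f$ a line in a fibre and $C_2=g$ a section over a line of $\mathbb P^r$. Writing $H$ for the pullback of the hyperplane class of $\mathbb P^r$ and $\xi$ for the tautological class, which form a basis of $\Pic(X)$, one computes $f\cdot H=0$, $f\cdot\xi=1$ and $g\cdot H=1$, so the intersection matrix of $\{f,g\}$ against $\{H,\xi\}$ has determinant $-1$. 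Hence $C_1,C_2$ form a $\mathbb Z$-basis of $N_1(X)$, the Mori cone is unimodular, and its Hilbert basis is exactly $\{C_1,C_2\}$.

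Next I would read off the shape of $\mathscr A_X$. By Proposition \ref{polyhedral} and the previous step, $\mathscr A_X$ is the intersection of the two half-spaces $\{D:D\cdot C_i\geq 1\}$, $i=1,2$: indeed any other curve class is of the form $mC_1+nC_2$ with $m,n\geq 0$ integers, and for such a class $v\cdot(mC_1+nC_2)=m+n\geq 1$ at the point $v$ defined by $v\cdot C_1=v\cdot C_2=1$, so no further constraint is active. Because $C_1,C_2$ are linearly independent this makes $\mathscr A_X=v+\Nef(X)$ a translate of the nef cone, whence $A_X=\{v\}$ is a single point; and since $\{C_1,C_2\}$ is a $\mathbb Z$-basis, the solution $v$ of $v\cdot C_1=v\cdot C_2=1$ is integral. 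Thus $A_X$ is a single lattice point, which is a normal lattice polytope (this is precisely the instance recorded in the Remark following Proposition \ref{P_Body}).

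Finally I would conclude. With $A_X$ a normal lattice polytope and $L\in(2h-1)\cdot\mathscr A_X$, Proposition \ref{P_Body} yields $L\in\sum_{i=1}^{2h-1}(\mathscr A_X\cap N^1(X))$, so $L$ is a sum of $2h-1$ integral classes each meeting every curve with multiplicity at least one; such classes are ample and, $X$ being smooth and toric, very ample, so Proposition \ref{sevpts} gives $T_h(X)=\emptyset$. Equivalently, this is the implication $(b)\Rightarrow(c)$ of Theorem \ref{main_to}. The main obstacle is the integrality of $v$, equivalently the normality of $A_X$: the fact that $\mathscr A_X$ is a translated cone is soft two-dimensional geometry, but pinning down that its vertex is a lattice point genuinely uses the unimodularity of the Mori cone coming from Kleinschmidt's structure theorem.
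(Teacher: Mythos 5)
Your proof is correct, and it reaches the same key intermediate fact as the paper --- that for a smooth projective toric variety of Picard rank two the polytope $A_X$ is a single lattice point, hence normal, so that Theorem \ref{main_to} (equivalently Propositions \ref{P_Body} and \ref{sevpts}) applies --- but by a genuinely different route. The paper works intrinsically on the divisor side: it takes the two prime invariant divisors $D_1,D_2$ whose classes span $\Nef(X)$, considers the torus-fixed point $p=\bigcap_{i\geq 3}D_i$, and uses the triviality of the local class group $\Cl(X,p)=\Cl(X)/\langle D_1,D_2\rangle$ at a smooth point to conclude that $D_1,D_2$ form a $\mathbb Z$-basis, so $\Nef(X)$ is smooth and simplicial and $A_X=\{D_1+D_2\}$. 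You instead invoke Kleinschmidt's classification of rank-two smooth toric varieties as projectivized split bundles over $\mathbb P^r$ and work on the curve side, exhibiting the fibre line $f$ and a section $g$ over a line as a unimodular pair generating the Mori cone, so that the vertex $v$ of $\mathscr A_X=v+\Nef(X)$ solves $v\cdot f=v\cdot g=1$ integrally (indeed $v=H+\xi$). Both arguments are sound; the paper's is self-contained and avoids the classification theorem, while yours makes the unimodularity completely transparent and identifies $A_X$ explicitly. Two small points you should make explicit if you write this up: (i) Kleinschmidt's normal form requires the twists $a_i\geq 0$ with a trivial summand present, which is what makes $\xi$ nef and hence makes $\mathrm{cone}(f,g)$ contain (so equal) the Mori cone --- without this the claim that $f$ and $g$ generate the two extremal rays is not justified; and (ii) the case $h=1$, which you correctly set aside, is needed because Proposition \ref{sevpts} requires $2h-1\geq 3$.
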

\begin{proof}
If $X$ is a smooth projective toric variety
with Picard rank two, then its nef cone is
generated by two primitive rays, so that
it is simplicial. 

To prove that it is also smooth
we proceed as follows. Let $D_1,\dots,D_r$
be the prime torus invariant divisors of $X$
and say that $D_1, D_2$ are the two 
whose classes generate ${\rm Nef}(X)$.
The intersection $\bigcap_{i=3}^r D_i$ is 
a torus invariant point $p\in X$ and the 
local divisor class group ${\rm Cl}(X,p)$
is generated by the class of the prime 
invariant divisors which contain $p$,
that is by the $D_i$ with $i\geq 3$.
Thus ${\rm Cl}(X,p) = {\rm Cl}(X\setminus
D_1\cup D_2) = {\rm Cl}(X)/\langle D_1,D_2\rangle$.

On the other hand ${\rm Cl}(X,p)$ is trivial,
being $X$ smooth, so that we can conclude
that ${\rm Cl}(X) = \langle D_1,D_2\rangle$,
which proves that the classes of $D_1$ and 
$D_2$ form a basis of ${\rm Cl}(X)$.
This shows that ${\rm Nef}(X)$ is smooth.
Being this cone smooth and simplicial we get
that $A_X$ is a point. Finally, to conclude it is enough to apply Theorem \ref{main_to}. 
\end{proof}

Choose positive integers $a_1\leq a_2\leq \dots\leq a_n$ such that $\sum_{i=1}^na_i = N-n+1$, $\Lambda_i\cong\mathbb{P}^{a_i}\subset\mathbb{P}^N$ complementary linear subspaces, $C_i\subset\mathbb{P}^{a_i}$ rational normal curves, isomorphisms $\phi_i:C_1\rightarrow C_i$, and consider the rational normal scroll $S_{a_1,\dots,a_n}=\bigcup_{p\in C_1}\left\langle p,\phi_2(p),\dots,\phi_n(p)\right\rangle$. Let $H$ be the restriction to $S_{a_1,\dots,a_n}$ of the hyperplane section of $\mathbb{P}^N$, $D = dH$ and $S_{a_1,\dots,a_n,d}\subset\mathbb{P}^{N_d}$ the image of $S_{a_1,\dots,a_n}$ via the embedding induced by $D$.

\begin{Corollary}\label{scroll}
If $h\leq\lceil\frac{d}{2}\rceil$ then $T_h(S_{a_1,\dots,a_n,d})$ is empty.
\end{Corollary}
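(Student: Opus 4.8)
The plan is to realize the rational normal scroll $S_{a_1,\dots,a_n}$ as a projective toric variety and then invoke Theorem \ref{main_to}. The key observation is that a rational normal scroll is toric: it is the projectivization $\mathbb P(\mathcal O_{\mathbb P^1}(a_1)\oplus\dots\oplus\mathcal O_{\mathbb P^1}(a_n))$ embedded by its tautological hyperplane bundle $H$, and this is a smooth toric variety of Picard rank two. First I would identify the lattice polytope $P_d$ corresponding to the polarization $D=dH$, so that $S_{a_1,\dots,a_n,d}=X_{P_d}$ with the embedding given by the complete linear system $|D|$. Since the scroll has Picard rank two, I could in fact route the argument through Proposition \ref{Pic2} rather than Theorem \ref{main_to} directly: Proposition \ref{Pic2} already establishes emptiness of $T_h(X)$ for smooth toric varieties of Picard rank two provided $L\in(2h-1)\cdot\mathscr A_X$, and it shows along the way that $A_X$ is a point, hence automatically a normal lattice polytope.

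The second step is therefore to compute the length $\ell(P_d)$, or equivalently to determine the minimal degree of a torus-invariant curve on the embedded scroll $S_{a_1,\dots,a_n,d}$. The torus-invariant curves are the fibers of the ruling and the sections corresponding to the edges of the scroll; under the embedding by $D=dH$ the fibers have degree $d$ while the directrix sections have degree $d\cdot a_i$. Since $a_1\le\dots\le a_n$ are positive, the minimal invariant curve degree is achieved by the fiber class and equals $d$, so $\ell(P_d)=d$. This identifies the relevant invariant of the polytope: the shortest edge has lattice length $d$.

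Given $\ell(P_d)=d$, the hypothesis $h\le\lceil\frac d2\rceil$ is exactly what is needed to apply the equivalence in Theorem \ref{main_to}. Indeed $h\le\lceil\frac d2\rceil$ is equivalent to $2h-1\le d=\ell(P_d)$ (checking the parity cases of $\lceil d/2\rceil$ shows $2\lceil d/2\rceil-1\le d$ and that this is the largest $h$ satisfying the bound). Thus condition $(a)$ of Theorem \ref{main_to}, namely $\ell(P_d)=s$ with $s\ge 2h-1$, holds, and since $A_{X_{P_d}}$ is a point and hence a normal lattice polytope, the theorem yields that condition $(c)$, the emptiness of $T_h(S_{a_1,\dots,a_n,d})$, holds as well.

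The main obstacle I anticipate is the bookkeeping of the toric model: verifying cleanly that $S_{a_1,\dots,a_n}$ is the smooth toric variety $\mathbb P(\bigoplus\mathcal O_{\mathbb P^1}(a_i))$ with the stated embedding, and correctly reading off the degrees of all torus-invariant curves so as to pin down $\ell(P_d)=d$ without sign or normalization errors. Once the scroll is set up as a Picard-rank-two toric variety with $A_X$ a point and $\ell(P_d)=d$, the rest is a direct invocation of Proposition \ref{Pic2} or Theorem \ref{main_to} together with the elementary inequality translating $h\le\lceil\frac d2\rceil$ into $2h-1\le d$.
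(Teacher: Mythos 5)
Your proposal is correct and follows essentially the same route as the paper, which also realizes $S_{a_1,\dots,a_n,d}$ as a smooth toric variety of Picard rank two and concludes by Proposition \ref{Pic2}. The only difference is that you spell out the verification that $dH\in(2h-1)\cdot\mathscr A_X$ by computing the degrees of the invariant curves (fiber lines of degree $d$, sections of degree $da_i$) and translating $h\le\lceil d/2\rceil$ into $2h-1\le d$, a step the paper leaves implicit.
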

\begin{proof}
Since $S_{a_1,\dots,a_n,d}$ is a smooth toric variety of Picard rank two the claim follows from Proposition \ref{Pic2}.
\end{proof}

Let $\pmb{n}=(n_1,\dots,n_r)$ and $\pmb{d} = (d_1,\dots,d_r)$ be two
$r$-uples of positive integers, with $n_1\leq \dots \leq n_r$ and
$N(\pmb{n},\pmb{d})=\prod_{i=1}^r\binom{n_i+d_i}{n_i}-1$. The
Segre-Veronese variety $SV_{\pmb{d}}^{\pmb{n}}$ is the image in
$\mathbb{P}^{N(\pmb{n},\pmb{d})}$ of
$\mathbb{P}^{n_1}\times\dots\times\mathbb{P}^{n_r}$ via the embedding
induced by $ 
\mathcal{O}_{\mathbb{P}^{\pmb{n}} }(d_1,\dots,
d_r)=\mathcal{O}_{\mathbb{P}(V_1^{*})}(d_1)\boxtimes\dots\boxtimes
\mathcal{O}_{\mathbb{P}(V_1^{*})}(d_r)$.

As a consequence of Theorem \ref{sevpts} we recover the results on Terracini loci of Segre-Veronese varieties in \cite[Theorem 1.3]{Bal22}.

\begin{Corollary}\label{TCSV}
If $h\leq\lceil\frac{d_i}{2}\rceil$ for all $i = 1,\dots,r$ then $T_h(SV_{\pmb{d}}^{\pmb{n}})$ is empty.
\end{Corollary}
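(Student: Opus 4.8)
The plan is to deduce Corollary \ref{TCSV} directly from Proposition \ref{sevpts} by exhibiting the polarizing line bundle of $SV_{\pmb{d}}^{\pmb{n}}$ as a sum of sufficiently many very ample divisors. Concretely, the Segre-Veronese variety is embedded by $L = \mathcal{O}_{\mathbb{P}^{\pmb{n}}}(d_1,\dots,d_r)$, and I would first observe that the building block $\mathcal{O}_{\mathbb{P}^{\pmb{n}}}(1,\dots,1)$, the Segre embedding, is very ample on $\mathbb{P}^{n_1}\times\dots\times\mathbb{P}^{n_r}$. More generally, for any multidegree $(e_1,\dots,e_r)$ with all $e_i\geq 1$ the divisor $\mathcal{O}_{\mathbb{P}^{\pmb{n}}}(e_1,\dots,e_r)$ is very ample, since it is a tensor product of pullbacks of very ample bundles from the factors.

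The key combinatorial step is then to write $L$ as a sum of at least $2h-1$ very ample summands, each having all multidegrees positive. Since $h\leq\lceil\frac{d_i}{2}\rceil$ for every $i$, we have $d_i\geq 2h-1$ for all $i$ (because $\lceil\frac{d_i}{2}\rceil\geq h$ forces $d_i\geq 2h-1$). Hence for each factor index $i$ we may split $d_i = \sum_{j=1}^{2h-1} c_{ij}$ into $2h-1$ positive integers $c_{ij}\geq 1$; for instance distribute the $d_i$ units as evenly as possible among the $2h-1$ slots, which is possible precisely because $d_i\geq 2h-1$. Setting $L_j := \mathcal{O}_{\mathbb{P}^{\pmb{n}}}(c_{1j},\dots,c_{rj})$ for $j=1,\dots,2h-1$, each $L_j$ has strictly positive multidegree and is therefore very ample, while by construction $L = L_1+\dots+L_{2h-1}$.

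With this decomposition in hand, the hypotheses of Proposition \ref{sevpts} are met: $SV_{\pmb{d}}^{\pmb{n}}$ is smooth, embedded by the complete linear system $|L|$ (the embedding is projectively normal, so the linear system is complete), and $L$ is a sum of $2h-1\geq 3$ very ample divisors provided $h\geq 2$. Applying the proposition immediately yields that $T_h(SV_{\pmb{d}}^{\pmb{n}})$ is empty. For the degenerate case $h=1$ the statement is trivial, since $T_1(X)$ is always empty for a smooth variety.

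The main obstacle, such as it is, lies in the bookkeeping of the two border conditions rather than in any deep geometry. First, one must confirm that $h\leq\lceil\frac{d_i}{2}\rceil$ really does give $d_i\geq 2h-1$ and not merely $d_i\geq 2h-2$: the ceiling is essential here, since $\lceil\frac{d_i}{2}\rceil\geq h$ means $\frac{d_i}{2}>h-1$, i.e. $d_i>2h-2$, whence $d_i\geq 2h-1$. Second, one needs the required inequality $2h-1\geq 3$, which holds as soon as $h\geq 2$, so the $h=1$ case must be dispatched separately as above. Beyond these two checks the argument is a direct application of the previously established Proposition \ref{sevpts}, so no genuinely new difficulty arises.
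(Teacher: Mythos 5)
Your proposal is correct and follows essentially the same route as the paper: both deduce the statement from Proposition \ref{sevpts} by using $h\leq\lceil\frac{d_i}{2}\rceil\Rightarrow d_i\geq 2h-1$ to split $L=\mathcal O_{\mathbb P^{\pmb n}}(d_1,\dots,d_r)$ into $2h-1$ very ample summands of positive multidegree (the paper takes $2(h-1)$ copies of $\mathcal O(1,\dots,1)$ plus one remainder $B$ with $b_i\geq 1$, while you distribute the degrees evenly, which is an immaterial difference). Your explicit treatment of the $h=1$ edge case, where the hypothesis $2h-1\geq 3$ of Proposition \ref{sevpts} fails, is a small point the paper leaves implicit.
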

\begin{proof}
Since $h\leq\lceil\frac{d_i}{2}\rceil$ for all $i = 1,\dots,r$ we may write $L = 2A+B$ where $L = \mathcal{O}_{\mathbb{P}^{\pmb{n}} }(d_1,\dots,d_r)$, $A = \mathcal{O}_{\mathbb{P}^{\pmb{n}} }(h-1,\dots,h-1)$ and $B = \mathcal{O}_{\mathbb{P}^{\pmb{n}} }(b_1,\dots,b_r)$ with $b_i \geq 1$ for all $i = 1,\dots,r$.

Since the Mori cone of $\mathbb{P}^{n_1}\times\dots\times\mathbb{P}^{n_r}$ is generated by the classes $[l_i]$, where $l_i\subset\mathbb{P}^{n_i}$ is a line to conclude it is enough to apply Theorem \ref{sevpts}.
\end{proof}

\begin{Corollary}\label{TCSV2}
If $2h\leq\lceil\frac{d_i}{2}\rceil$ for all $i = 1,\dots,r$ then any point of $\sec_h(SV_{\pmb{d}}^{\pmb{n}})\setminus \sec_{h-1}(SV_{\pmb{d}}^{\pmb{n}})$ is $h$-identifiable and $\sec_h(SV_{\pmb{d}}^{\pmb{n}})$ is smooth outside of $\sec_{h-1}(SV_{\pmb{d}}^{\pmb{n}})$.
\end{Corollary}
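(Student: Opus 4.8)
The plan is to derive the statement directly from two results already established in the paper: the emptiness criterion for Terracini loci of Segre--Veronese varieties (Corollary~\ref{TCSV}) and the bridge between the vanishing of a higher Terracini locus and identifiability/smoothness of the secant variety (Proposition~\ref{h-2h}). No new analytic input is needed; the proof is an assembly of these two engines together with an index doubling.

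First I would activate the emptiness result at the doubled index. The key observation is that the hypothesis $2h\leq\lceil\frac{d_i}{2}\rceil$ for all $i=1,\dots,r$ is precisely the hypothesis of Corollary~\ref{TCSV} with the free integer taken to be $2h$ rather than $h$. Since that corollary holds for every positive integer, the substitution $h\mapsto 2h$ is legitimate and yields that $T_{2h}(SV_{\pmb{d}}^{\pmb{n}})$ is empty. Second, I would feed this vanishing into Proposition~\ref{h-2h}: the variety $SV_{\pmb{d}}^{\pmb{n}}$ is smooth, irreducible and non-degenerate, so the hypotheses of that proposition are met, and emptiness of $T_{2h}$ immediately gives both conclusions, namely that every point of $\sec_h(SV_{\pmb{d}}^{\pmb{n}})\setminus\sec_{h-1}(SV_{\pmb{d}}^{\pmb{n}})$ is $h$-identifiable, and that $\Sing(\sec_h(SV_{\pmb{d}}^{\pmb{n}}))\subset\sec_{h-1}(SV_{\pmb{d}}^{\pmb{n}})$, which is the asserted smoothness of $\sec_h(SV_{\pmb{d}}^{\pmb{n}})$ outside $\sec_{h-1}(SV_{\pmb{d}}^{\pmb{n}})$.

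The proof is therefore a two-step combination with no genuine obstacle. The only point demanding care is the index bookkeeping: confirming that the quantified hypothesis survives the replacement $h\mapsto 2h$ in Corollary~\ref{TCSV}, and that the smoothness of $SV_{\pmb{d}}^{\pmb{n}}$ required by Proposition~\ref{h-2h} is available (it is, since $SV_{\pmb{d}}^{\pmb{n}}$ is a smooth toric variety). All the substantive content has already been discharged upstream: the $H^1$-vanishing from Corollary~\ref{Cindep} and Proposition~\ref{sevpts} underlying Corollary~\ref{TCSV}, and the tangent-space and fiber-dimension argument behind Proposition~\ref{h-2h}. I expect the write-up to be only a few lines, essentially citing these two results in sequence.
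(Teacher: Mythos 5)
Your proof is correct and is exactly the paper's argument: the paper's own proof is the one-line "follows immediately from Corollary \ref{TCSV} and Proposition \ref{h-2h}", and your write-up simply makes explicit the index substitution $h\mapsto 2h$ in Corollary \ref{TCSV} followed by the application of Proposition \ref{h-2h}. No differences to report.
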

\begin{proof}
The claim follows immediately from Corollary \ref{TCSV} and Proposition \ref{h-2h}.
\end{proof}

\begin{Proposition}\label{hom}
Let $X\subset\mathbb{P}^N$ be a homogeneous variety embedded by the complete linear system of a divisor $L\in (2h-1)\cdot\mathscr{A}_X$. Then $T_h(X)$ is empty.
\end{Proposition}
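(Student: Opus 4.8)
The plan is to reduce the statement to Proposition~\ref{sevpts} by exhibiting $L$ as a sum of $2h-1$ very ample divisors. We may assume $h\geq 2$, since $T_1(X)$ is empty for every smooth variety (a smooth point always has embedded tangent space of the expected dimension $n$, so the defining inequality $n<\min\{n,N\}$ of Definition~\ref{TC_Def} never holds).

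First I would recall the structure of a rational homogeneous variety $X=G/P$, with $G$ semisimple and $P$ parabolic. Writing $I_P$ for the set of simple roots not lying in the Levi factor of $P$ and $r=|I_P|$, the group $\Pic(X)$ is free of rank $r$ with $\mathbb Z$-basis the fundamental weights $\omega_i$ ($i\in I_P$); these generate the nef cone $\Nef(X)$, which is therefore smooth and simplicial. Dually, the Mori cone $\NEbar(X)$ is rational polyhedral, generated by the Schubert lines $C_i$ ($i\in I_P$) normalized so that $\omega_i\cdot C_j=\delta_{ij}$. In particular the monoid of curve classes is finitely generated, so Proposition~\ref{polyhedral} applies and gives $\mathscr A_X=A_X+\Nef(X)$. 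Writing $D=\sum_{i\in I_P}a_i\omega_i$, the duality yields $D\cdot C_j=a_j$, whence
\[
\mathscr A_X=\{\,D : D\cdot C_i\geq 1\ \text{for all}\ i\in I_P\,\}=\Big\{\,\sum_{i\in I_P}a_i\omega_i : a_i\geq 1\,\Big\}.
\]
The unique vertex of this polyhedron is the lattice point $\omega:=\sum_{i\in I_P}\omega_i$, so that $A_X=\{\omega\}$ and $\mathscr A_X=\omega+\Nef(X)$.

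With this in hand, the hypothesis $L\in(2h-1)\mathscr A_X$ is equivalent to $L\cdot C_i\geq 2h-1$ for all $i\in I_P$, that is, to the class $E:=L-(2h-1)\omega$ being nef. I would then write
\[
L=\underbrace{\omega+\cdots+\omega}_{2h-2}+(\omega+E),
\]
a sum of $2h-1$ divisors. The weight $\omega$ is strictly dominant, hence ample, and adding the nef class $E$ keeps $\omega+E$ ample; since on a rational homogeneous space every ample divisor is very ample, each of the $2h-1$ summands is very ample. As $2h-1\geq 3$, Proposition~\ref{sevpts} gives that $T_h(X)$ is empty.

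The step I expect to carry the real weight is the classical input (via Borel--Weil) that ample line bundles on $G/P$ are very ample: it is precisely what upgrades the displayed decomposition from globally generated to very ample summands, and Proposition~\ref{sevpts} genuinely requires very ampleness rather than mere global generation. The description of $\Nef(X)$ and $\NEbar(X)$ is standard; the one point to check with care is that the Mori cone is generated by curves dual to a lattice basis of $\Pic(X)$, which is exactly what forces $A_X$ to collapse to the single lattice point $\omega$ and makes the normality hypotheses automatic.
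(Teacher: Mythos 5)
Your proof is correct and follows essentially the same route as the paper: both identify $A_X$ as the single lattice point $\omega=\sum_i\omega_i$ given by the sum of the primitive generators of the smooth simplicial nef cone of $G/P$, deduce that $L$ decomposes as a sum of $2h-1$ very ample divisors, and conclude via Proposition~\ref{sevpts}. Your write-up is in fact slightly more careful than the paper's, since you make explicit both the reduction $h\geq 2$ and the key fact that ample implies very ample on $G/P$, which Proposition~\ref{sevpts} genuinely requires and which the paper's appeal to Propositions~\ref{P_Body} and~\ref{sevpts} leaves implicit.
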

\begin{proof}
If $X$ is a homogeneous variety then ${\rm Nef}(X) = {\rm Eff}(X)$ and both 
cones are smooth and simplicial~\cite[Proposition 1.4.1]{Bri05}. In particular $A_X$ is the lattice point 
given by the sum of the primitive generators of the rays of the nef cone. Hence, to conclude it is enough to apply Propositions \ref{P_Body} and \ref{sevpts}.
\end{proof}

If $L\in r\cdot\mathscr{A}_X$ with $r\leq 2h-2$ the Terracini locus is in general non empty as the following result shows.

\begin{Proposition}\label{grass}
Let $\mathbb{G}(r,n)\subset \mathbb{P}^N$ be the Grassmannian of $r$-linear spaces in $\mathbb{P}^n$ embedded with the Pl\"ucker embedding. Then $T_2(\mathbb{G}(r,n))$ is the closure of
$$\{([U],[U'])\in
\mathbb{G}(r,n)\times \mathbb{G}(r,n)\: | \: U\neq U' \text{ and } \dim (U\cap U')\geq r-2 \}/ S_2$$
in $(\mathbb{G}(r,n)\times \mathbb{G}(r,n))/S_2$.
\end{Proposition}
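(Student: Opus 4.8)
The plan is to read off $T_2(\mathbb{G}(r,n))$ directly from Definition~\ref{TC_Def}: a pair of distinct points $([U],[U'])$ lies in $T_2(\mathbb{G}(r,n))$ exactly when the two embedded tangent spaces $T_{[U]}\mathbb{G}(r,n)$ and $T_{[U']}\mathbb{G}(r,n)$ span a linear subspace of $\mathbb{P}^N$ of dimension strictly less than $\min\{2\dim\mathbb{G}(r,n)+1,N\}$. Thus the entire problem reduces to deciding, in terms of the relative position of the two $r$-planes $U,U'\subseteq\mathbb{P}^n$, when these two linear spaces fail to be in general position. I would carry this out by an explicit linear-algebra computation in Pl\"ucker coordinates, the point being that both tangent spaces are coordinate subspaces for one well-chosen basis, so their intersection becomes transparent.

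Write $V=k^{n+1}$ and view a point of $\mathbb{G}(r,n)$ as an $(r+1)$-dimensional subspace $U\subseteq V$. First I would record the standard description of the affine tangent cone in $\wedge^{r+1}V$, namely $\widehat T_{[U]}=(\wedge^{r}U)\wedge V$. Next, choose a basis of $V$ adapted to the flag $U\cap U'\subseteq U,U'\subseteq U+U'\subseteq V$: a basis $e_1,\dots,e_s$ of $W:=U\cap U'$, vectors $f_1,\dots$ completing a basis of $U$, vectors $g_1,\dots$ completing a basis of $U'$, and vectors $h_1,\dots$ completing a basis of $V$. In the resulting monomial basis of $\wedge^{r+1}V$ both $\widehat T_{[U]}$ and $\widehat T_{[U']}$ are \emph{coordinate subspaces}: a basis monomial lies in $\widehat T_{[U]}$ precisely when at most one of its $r+1$ factors lies outside $U=\langle e,f\rangle$, that is when at least $r$ of them lie in $U$, and symmetrically for $U'=\langle e,g\rangle$. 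Consequently $\widehat T_{[U]}\cap\widehat T_{[U']}$ is spanned exactly by the monomials common to both lists.

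The heart of the argument is then a one-line count. A common monomial uses at least $r$ factors from $U$ and at least $r$ from $U'$; if $\alpha,\beta,\gamma,\delta$ denote the numbers of its factors in $W$, in $U\setminus W$, in $U'\setminus W$, and outside $U+U'$ respectively, then $\alpha+\beta\ge r$, $\alpha+\gamma\ge r$ and $\alpha+\beta+\gamma+\delta=r+1$, whence $\alpha\ge r-1+\delta\ge r-1$. Since $\alpha\le\dim_k W=\dim(U\cap U')+1$, such a monomial can exist only if $\dim(U\cap U')\ge r-2$; conversely, when $\dim(U\cap U')=r-2$ the explicit vector $e_1\wedge\cdots\wedge e_{r-1}\wedge f_1\wedge g_1$ (and, when $W$ is larger, a vector of the form $\omega_{W'}\wedge v$ with $\dim_k W'=r$) is a nonzero common element. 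This establishes that $T_{[U]}\mathbb{G}(r,n)\cap T_{[U']}\mathbb{G}(r,n)\neq\emptyset$ if and only if $\dim(U\cap U')\ge r-2$, which is exactly the incidence condition in the statement; taking the quotient by $S_2$ and the closure (the diagonal $U=U'$ reappears as a boundary component) yields the asserted set.

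The step I expect to be the main obstacle is the passage from \emph{the two tangent spaces meet} to \emph{the pair lies in $T_2$}. When $2\dim\mathbb{G}(r,n)+1\le N$ the two tangent spaces are in general position exactly when they are disjoint, so meeting is equivalent to a deficient span and the equivalence above is immediate. In the over-determined regime $2\dim\mathbb{G}(r,n)+1>N$, which occurs for $n$ small relative to $r$, a deficient span instead requires $\dim_k(\widehat T_{[U]}\cap\widehat T_{[U']})>2\dim\mathbb{G}(r,n)+1-N$, and here the naive incidence threshold can shift by one. To control these cases I would upgrade the monomial count above from \emph{existence of a common monomial} to an \emph{exact computation} of $\dim_k(\widehat T_{[U]}\cap\widehat T_{[U']})$ as a function of $\dim(U\cap U')$, and compare it with this threshold; away from these finitely many low-dimensional exceptions the clean \emph{meet} criterion suffices.
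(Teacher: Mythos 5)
Your proof takes essentially the same route as the paper's: after choosing a basis adapted to $U\cap U'\subseteq U,U'$, both embedded tangent spaces become coordinate subspaces of $\mathbb{P}(\wedge^{r+1}V)$ spanned by the Pl\"ucker monomials with at most one factor outside the respective subspace (the paper phrases this as Hamming distance at most $1$, quoting a lemma from the literature rather than deriving it from $(\wedge^{r}U)\wedge V$), and your count $\alpha\geq r-1$ versus $\alpha\leq s+1$ is exactly the paper's criterion that the intersection is empty iff the Hamming distance between the two index sets is at least $3$, i.e.\ iff $s\leq r-3$. The one point where you go beyond the source is the caveat about the over-determined regime $2\dim\mathbb{G}(r,n)+1>N$: this is a legitimate concern (e.g.\ for $\mathbb{G}(1,3)$ the stated incidence condition is vacuous while two distinct tangent hyperplanes of the quadric still span $\mathbb{P}^5$), but the paper's proof does not address it either and simply identifies membership in $T_2$ with non-empty intersection of the tangent spaces, so on this step you are, if anything, more careful than the original.
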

\begin{proof}
By \cite[Lemma 6.5]{MR18} we have the following characterization of the tangent space of the Grassmannian at a point
$[U]\in \mathbb{G}(r,n)$:
$$T_{[U]}(\mathbb{G}(r,n))
=\langle
e_I \:|\: d(I,\{0,\dots,r\})\leq 1
\rangle
$$
where 
$(e_0,\dots,e_n)$ is a basis of $k^{n+1},$ 
$U$ is generated by $e_0,\dots,e_r\in \mathbb{P}^n,$
$e_I = e_{i_0}\wedge \cdots \wedge e_{i_r}$
and $d(I,J)$ is the Hamming distance between the two lists $I$ and $J.$

Now, given $[U],[V]\in \mathbb{G}(r,n)$ let $s:=\dim(U \cap V)$, after a base change, we can write
$U=\langle e_0,\dots,e_r \rangle$
and $U'=\langle e_0,\dots,e_s,e_{r+1},\dots,e_{r+(r-s)} \rangle.$
Note that $s=\dim (U\cap U').$ Therefore
$$
\begin{small}
\begin{array}{ll}
T_{[U]}(\mathbb{G}(r,n)) \cap  T_{[U']}(\mathbb{G}(r,n)) & = \langle e_I \: | \: d(I,\{0,\dots,r\})\leq 1\rangle\cap\langle
e_I \: | \: d(I,\{0,\dots,s,r+1,\dots,r+(r-s)\})\leq 1\rangle\\ 
 & = \langle e_I \: | \: d(I,\{0,\dots,r\})\leq 1 \mbox{ and } d(I,\{0,\dots,s,r+1,\dots,r+(r-s)\})\leq 1
\rangle.
\end{array} 
\end{small}
$$
Hence, $T_{[U]}(\mathbb{G}(r,n)) \cap  T_{[U']}(\mathbb{G}(r,n))$ is empty if and only if $d(\{0,\dots,r\},
\{0,\dots,s,r+1,\dots,r+(r-s)\}) \geq 3$
which is equivalent to $s\leq r-3$.
\end{proof}

\stepcounter{thm}
\subsection{Fano varieties}\label{Fano}
We show that Theorem \ref{main_to} applies to smooth toric Fano varieties of dimension at most four.
\begin{Proposition}\label{P_Fano}
Let $X$ be a smooth toric Fano variety of dimension at most four. Then $A_X$ is a lattice point.
\end{Proposition}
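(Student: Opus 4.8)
The plan is to reduce the proposition to a finite, computer-assisted check over the classification of smooth toric Fano varieties of dimension at most four, after first isolating the relevant linear-algebra criterion. Since $X$ is a smooth projective toric variety, its Mori cone is rational polyhedral and generated by the classes of the torus invariant curves; let $R_1,\dots,R_k\in {\rm N}_1(X)$ be the primitive generators of its extremal rays, so that, as in Propositions \ref{polyhedral} and \ref{TC2}, $\mathscr A_X=\{D\in {\rm N}^1(X)_{\mathbb R}\,:\,D\cdot R_i\geq 1\text{ for all }i\}$. The key remark is that if there exists $v\in {\rm N}^1(X)_{\mathbb R}$ with $v\cdot R_i=1$ for every $i$, then $D\in\mathscr A_X$ if and only if $(D-v)\cdot R_i\geq 0$ for all $i$, that is $\mathscr A_X=v+{\rm Nef}(X)$; comparing with the Minkowski decomposition of Proposition \ref{polyhedral} gives $A_X=\{v\}$. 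As the $R_i$ span ${\rm N}_1(X)_{\mathbb R}$ such a $v$ is unique, and $A_X$ is a \emph{lattice} point precisely when $v\in {\rm Pic}(X)$. Thus the proposition is equivalent to the assertion that, for each $X$ in the relevant range, the inhomogeneous system $v\cdot R_i=1$ admits an integral solution.

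Next I would invoke the classification of smooth toric Fano varieties, which is finite in each dimension, with exactly $1$, $5$, $18$ and $124$ isomorphism classes in dimensions $1,2,3,4$; their fans are tabulated in the \href{http://www.grdb.co.uk/search/toricsmooth}{Graded Ring Database}. For each fan one reads off the maximal cones and the walls, the walls giving the torus invariant curves and hence, through the associated wall relations, the classes $R_i$ generating the Mori cone inside ${\rm N}_1(X)={\rm Pic}(X)^{\vee}$. The Magma \cite{Magma97} script then loops over the database entries: for each $X$ it assembles the finitely many half-spaces defining $\mathscr A_X$, computes this polyhedron together with its decomposition $A_X+{\rm Nef}(X)$, and checks that $A_X$ reduces to a single lattice point; concretely this is the test whether the all-ones vector lies in the $\mathbb Z$-image of the matrix whose columns are the $R_i$, and whether the resulting solution $v$ is integral.

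The main obstacle is conceptual rather than computational: there is no uniform structural reason forcing $A_X$ to be a point. The mechanism used for Picard rank two and for homogeneous varieties in Propositions \ref{Pic2} and \ref{hom}, namely smoothness and simpliciality of ${\rm Nef}(X)$, is genuinely unavailable, because many of these Fano varieties have non-simplicial nef cones already in dimension two, the toric del Pezzo surface of degree six having six extremal curve classes in a rank four space. What rescues the statement is that the system $v\cdot R_i=1$ remains consistent and integral even in those non-simplicial cases (for the degree six del Pezzo one verifies $v=-K_X$), but this must be confirmed example by example rather than deduced from a single geometric principle. Consequently the proof is legitimately an exhaustive enumeration, and the bound on the dimension reflects the range of the available classification rather than a conceptual threshold; I would expect the careful part to be ensuring the script correctly recovers the Mori cone generators from the fan for all $124$ four-dimensional cases.
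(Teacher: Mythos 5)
Your proposal matches the paper's proof in essence: Proposition \ref{P_Fano} is established there purely by an exhaustive Magma computation over the $147$ database entries of dimension at most four, computing $\mathscr A_X$ as the intersection of the half-spaces $\{D\cdot C_i\geq 1\}$ for the invariant curves and checking that the compact part $A_X$ consists of a single lattice point. Your reformulation of the check as the integral solvability of the inhomogeneous system $v\cdot R_i=1$, and your observation that no uniform structural argument (such as the smooth simplicial nef cone used in Propositions \ref{Pic2} and \ref{hom}) is available, are both consistent with what the paper does; the only difference is cosmetic, namely that the paper tests the polyhedron directly rather than the linear system.
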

\begin{proof}
We load the $n$-th entry from the smooth toric Fano varieties database
using the function {\tt FanoX} from the following
library: \url{https://github.com/alaface/Terracini_Loci}.
The following function computes the 
ample polytope of the toric variety $X$.
\begin{tcolorbox}
\begin{verbatim}
AX := function(X)
 forms := IntersectionForms(X);
 pol := &meet[HalfspaceToPolyhedron(v,1) : v in forms];
 return CompactPart(pol);
end function;
\end{verbatim}
\end{tcolorbox}

Smooth toric varieties of dimension at most four are the first $147$ entries of the database.
For each such variety we check with the following script
\begin{tcolorbox}
\begin{verbatim}
> time {#Points(AX(FanoX(n))) : n in [1..147]};
{ 1 }
Time: 153.710
\end{verbatim}
\end{tcolorbox}
that the ample polytope consists of a point.
\end{proof}

However, there are smooth toric Fano $5$-fold for which $A_X$ is not a point. 

\begin{Example}
Let $X\subseteq\mathbb P^{192}$ be the 
smooth projective toric Fano $5$-fold, ID. $556$
in the  \href{http://www.grdb.co.uk/search/toricsmooth}
{Graded Ring Database}, whose
Cox ring $\mathbb C[x_1,\dots,x_{10}]$ has 
the following grading matrix
\[
 \begin{bmatrix}
    0&0&0&1&1&0&0&0&0&1\\
    0&0&1&0&0&0&1&1&0&0\\
    0&0&1&0&1&0&0&1&1&0\\
    0&1&1&0&1&0&0&1&0&1\\
    1&0&0&0&0&1&0&0&1&0 
 \end{bmatrix}
\]
Let $D_i$ be the $i$-th prime invariant
torus divisor of $X$. Then $A_X$ is a lattice 
segment of length one whose lattice points
are the classes of $-K_X$ and $-K_X+D_3$.
\end{Example}

\begin{proof}[Proof of Theorem \ref{thC}]
Since by Proposition \ref{P_Fano} $A_X$ is a lattice point Theorem \ref{main_to} applies.
\end{proof}

\bibliographystyle{amsalpha}
\bibliography{Biblio}
\end{document}